\documentclass[11pt,a4paper]{article}

\usepackage[T1]{fontenc}
\usepackage[utf8]{inputenc}
\usepackage{lmodern}
\usepackage[margin=1in]{geometry}
\usepackage{cite}
\usepackage{xcolor}
\usepackage{amsmath,amssymb,amsfonts,amsthm}
\usepackage{graphicx}
\usepackage{bm}
\usepackage{textcomp}
\usepackage{microtype}
\usepackage{enumitem}
\usepackage{needspace}
\usepackage[hidelinks]{hyperref}
\hypersetup{pdftitle={Necessary and Sufficient Criteria for Global PID Regulation of Nonlinear Uncertain Systems},
  pdfauthor={Cheng Zhao, Jingru Zhu, Lei Guo}}
\allowdisplaybreaks[2]
\newtheorem{theorem}{Theorem}[section]
\newtheorem{proposition}[theorem]{Proposition}
\newtheorem{lemma}[theorem]{Lemma}
\newtheorem{corollary}[theorem]{Corollary}
\theoremstyle{definition}
\newtheorem{definition}[theorem]{Definition}
\theoremstyle{remark}
\newtheorem{remark}[theorem]{Remark}

\newcommand{\R}{\mathbb R}
\newcommand{\Fcal}{\mathcal F}

\newcommand{\Bcal}{\mathcal B}

\newcommand{\dd}{\,\mathrm d}
\newcommand{\Opid}{\Omega_{\mathrm{pid}}}
\newcommand{\norm}[1]{\left\|#1\right\|}
\newcommand{\ip}[2]{\left\langle #1,#2\right\rangle}
\DeclareMathOperator{\col}{col}

\newcommand{\step}[1]{\par\smallskip\noindent\textbf{#1}\quad}

\makeatletter
\@addtoreset{equation}{section}
\makeatother

\title{Necessary and Sufficient Criteria for Global PID Regulation of Nonlinear Uncertain Systems}
\author{Cheng Zhao\thanks{State Key Laboratory of Mathematical Sciences,
Academy of Mathematics and Systems Science, Chinese Academy of Sciences,
Beijing 100190, China.}
\and Jingru Zhu\footnotemark[1]
\and Lei Guo\footnotemark[1]}
\date{}

\begin{document}
\maketitle

\begin{abstract}
Although classical PID control is widely used in engineering
practice, identifying all PID gains that guarantee global regulation
of nonlinear uncertain dynamics remains challenging.
For a class of second-order nonaffine systems under one-sided
derivative bounds, we establish necessary and sufficient conditions
for global PID regulation: a fixed PID controller globally regulates
the entire infinite-dimensional uncertainty class if and only if
it regulates a single boundary linear system.
Applying the Routh--Hurwitz criterion to this boundary linear system yields the exact admissible PID gain region.
To establish this equivalence, we prove a structured Markus--Yamabe
theorem for third-order integrator chains under uniform
derivative-ratio conditions.
Its proof combines a two-dimensional storage function constructed
via viscosity solutions of a Hamilton--Jacobi equation with
a scalar comparison argument. We further show that the same PID gain region
remains exact for multidimensional affine systems under
symmetry assumptions on the state partial Jacobians
and the input matrix.
\end{abstract}

\medskip
\noindent\textbf{Keywords:}
PID control, global regulation,
structured Markus--Yamabe theorem, nonlinear uncertain systems,  viscosity solutions

\section{Introduction}

Proportional--integral--derivative (PID) control is a linear feedback
 based on the output error, its integral, and its derivative.
Its simple structure and practical effectiveness have led to widespread
applications in process control, aerospace and automotive systems, and
robot manipulators
\cite{AstromHagglund2001,BouabdallahEtAl2004}.
The dynamics of these systems are typically nonlinear and subject to
uncertainty. Understanding the capability of linear PID feedback to globally
regulate nonlinear uncertain systems and deriving explicit gain
conditions remain important theoretical problems.

In practice, PID gains are often selected using empirical rules,
experiments, identified models, or local linearization
\cite{AstromEtAl1993}.
Local model information alone does not generally guarantee global
regulation over all setpoints and initial states.
Theoretical PID design has been studied extensively for linear systems,
including time-delay systems for which complete stabilizing gain
regions are available \cite{SilvaEtAl2005,OuZhangYu2009}.
For manipulators with compliant actuators, PID tracking control
has also been studied using singular perturbation methods
\cite{PanLiYu2019}.

For second-order affine nonlinear uncertain systems, Zhao and Guo
\cite{ZhaoGuo2017} established explicit sufficient PID gain conditions
and obtained an exact characterization for a subclass affine in the
velocity variable.
Subsequent work established sufficient conditions for global PID
regulation of nonaffine uncertain systems \cite{ZhaoGuo2022,ZhaoYuan2024}.
Sufficient conditions, however, may exclude admissible gains and thus limit the choices available for performance tuning.
This motivates the search for an exact characterization of all
PID gains that guarantee global regulation of a prescribed
nonlinear uncertainty class.

We consider scalar second-order nonaffine nonlinear systems
$\dot x_1=x_2$, $\dot x_2=f(x_1,x_2,u)$, where $f$ belongs to the infinite-dimensional uncertainty class
\[
 \Fcal_{L_1,L_2}
 :=\bigl\{f\in C^1(\R^3;\R):
 f_{x_1}\le L_1,\quad f_{x_2}\le L_2,\quad f_u\ge1
 \ \text{on }\R^3\bigr\},
\]
with prescribed constants $L_1,L_2>0$.
The positive lower bound on the input derivative is normalized to one.
The state-derivative bounds are one-sided: neither state derivative
is required to have a lower bound, and the input derivative has no
upper bound.
Thus the class includes systems that are nonaffine in the input
and need not be globally Lipschitz.
For each fixed PID gain triple, we prove that global regulation of
this entire nonlinear class is equivalent to global regulation of
the single boundary linear system
$f(x_1,x_2,u)=L_1x_1+L_2x_2+u$.
This system attains both state-derivative upper bounds and the
input-derivative lower bound. Its Routh--Hurwitz conditions give the exact admissible PID gain region
\[
 \Opid
 :=\bigl\{(k_p,k_i,k_d)\in\R^3:
 k_i>0,\quad k_p>L_1,\quad
 (k_p-L_1)(k_d-L_2)>k_i\bigr\}.
\]
These conditions depend only on the uncertainty bounds and guarantee
global regulation for every plant in the class.

The main difficulty in proving the exact characterization
is to show that the  gain conditions derived from the boundary linear
system also guarantee global asymptotic stability of the nonlinear
closed loop, without assuming global Lipschitz or
linear growth conditions of the plant dynamics.
For a class of planar affine systems, Zhao and Guo
\cite{ZhaoGuo2017} obtained necessary and sufficient PD gain
conditions using the two-dimensional Markus--Yamabe theorem,
provided that the prescribed setpoint is an equilibrium of the
uncontrolled plant.
In two dimensions, an everywhere Hurwitz Jacobian, together with an
equilibrium at the origin, implies global asymptotic stability
\cite{Gutierrez1995,ChenHeQin2001}.
This implication fails in general in dimensions three and higher,
even for polynomial vector fields \cite{CimaEtAl1997}.
Consequently, the planar argument cannot be extended directly after
introducing integral action.
After shifting the integral state to its equilibrium value, the PID
closed loop takes the companion form
\[
 \dot x=y,\qquad \dot y=z,\qquad \dot z=-F(x,y,z).
\]
The plant derivative bounds and the PID gains impose additional
relations among the derivatives of $F$.
The key question is whether these relations suffice to establish
global asymptotic stability for this structured family.

We answer this question by proving a structured Markus--Yamabe
theorem for third-order integrator chains under uniform
derivative-ratio conditions.
The main analytical step is to construct a two-dimensional storage
function using viscosity-solution theory for an auxiliary
Hamilton--Jacobi equation \cite{CrandallIshiiLions1992}.
Combined with a quadratic energy term, this function yields a
dissipation estimate that bounds two transformed state components,
while scalar comparison bounds the third.
Together, these estimates establish forward completeness,
boundedness, and global asymptotic stability.
The theorem is an independent stability result and supplies the
sufficiency argument for the exact PID gain characterization.

Finally, we extend the exact characterization to multidimensional
affine systems with scalar PID gains.
The two state partial Jacobians are assumed to be symmetric and
uniformly bounded above, and the constant input matrix is symmetric
positive definite with a known lower bound.
The proof uses a different mechanism: symmetry enables an explicit
variational construction of a positive definite, proper Lyapunov
function through integral potentials and finite-dimensional
maximization. Under these conditions, the same gain region and the same boundary
linear test remain valid in every dimension.
We also establish a limitation of this extension. If either
state-Jacobian symmetry assumption is removed while the
corresponding skew-symmetric part is left unrestricted,
no fixed choice of diagonal PID gain matrices guarantees
global regulation of the entire enlarged class. Thus, allowing different gains in different
coordinates does not remove this obstruction.

The paper is organized as follows.
Section~\ref{sec:main} presents the problem formulation and main results.
Section~\ref{sec:stability} develops the stability tools used to prove
the main results in Section~\ref{sec:proofs}.
Section~\ref{sec:conclusion} concludes the paper.
Appendices~\ref{app:HJ} and~\ref{app:storage} provide the
Hamilton--Jacobi result and the storage lemma, respectively.

\section{Problem Formulation and Main Results}
\label{sec:main}
In this section, we first formulate the global PID regulation problem
and characterize the exact gain region for scalar nonaffine systems
via a single boundary linear test.
We then consider the affine subclass and exhibit a fixed nonlinear
plant for which no admissible gains yield global exponential stability.
Finally, we extend the characterization to multidimensional affine
systems and clarify the role of
state-Jacobian symmetry.

\subsection{Scalar nonaffine systems: a single linear test}

Consider the second-order nonaffine nonlinear system
\begin{equation}\label{eq:plant}
 \dot x_1=x_2,\qquad \dot x_2=f(x_1,x_2,u),
 \qquad x_1,x_2,u\in\R,
\end{equation}
where $x_1$ and $x_2$ represent position and velocity, respectively,
$u$ is the control input, and $f\in C^1(\R^3;\R)$ is an unknown
nonlinear function.

The position--velocity structure in \eqref{eq:plant} follows from
Newton's second law and provides a basic model for motion control,
including actuated mass--spring--damper systems, single-axis
positioning mechanisms, and torque-actuated pendula
\cite{ZhaoGuo2022}.
Here, $f$ represents acceleration and incorporates uncertainty
in inertia, restoring forces, smooth friction, actuator
characteristics, and constant loads.
Nonlinear static actuator characteristics lead to nonaffine
input dependence.

Even when the precise dynamics are unknown, information about
stiffness, damping, and actuator sensitivity may provide bounds
on the partial derivatives of $f$ with respect to position,
velocity, and input.
These bounds define the uncertainty class and provide the
information used for PID gain design
\cite{ZhaoGuo2017,ZhaoGuo2022}.

\begin{definition}[Uncertainty class]\label{def:nonaffineclass}
For prescribed constants $L_1,L_2>0$, define
\begin{equation}\label{eq:nonaffineclass}
\begin{split}
 \Fcal_{L_1,L_2}:=
 \bigl\{f\in C^1(\R^3;\R):\;&
 f_{x_1}\le L_1,\quad f_{x_2}\le L_2,\quad f_u\ge1 \ \text{on }\R^3\bigr\}.
\end{split}
\end{equation}
Here, $f_{x_j}=\partial f/\partial x_j$ for $j=1,2$, and
$f_u=\partial f/\partial u$.
\end{definition}

In mechanical terms, positive values of $f_{x_1}$ and $f_{x_2}$
can be interpreted as \emph{anti-stiffness} and \emph{anti-damping},
respectively \cite{Krstic2017}.
With the other variables held fixed, these effects produce
acceleration changes that reinforce position and velocity
perturbations, respectively.
Thus, $L_1$ and $L_2$ bound the strength of these potentially
destabilizing effects.
These bounds are one-sided, allowing arbitrarily negative state
partial derivatives and hence strongly nonlinear restoring and
damping terms.

The condition $f_u\ge1$ specifies a known control direction and
a uniformly positive lower bound on control effectiveness.
Any known positive lower bound can be normalized to one by
input scaling.
No upper bound on $f_u$ is imposed, and the bias $f(0,0,0)$
is unrestricted, allowing nonlinear input dependence and
constant loads of unknown magnitude.

The class $\Fcal_{L_1,L_2}$ contains both linear and nonlinear
plants and is \emph{infinite-dimensional}.
For example, for any integer $m\ge1$, it contains
\[
 f(x_1,x_2,u)
 =\sum_{k=1}^{m}a_k x_1^{2k+1}
  +b\bigl(u+u^3\bigr),
 \qquad a_k\le0,\quad b\ge1.
\]
Indeed, $f_{x_1}\le0$, $f_{x_2}=0$, and
$f_u=b(1+3u^2)\ge1$.
Allowing arbitrary polynomial degrees gives admissible families of arbitrarily large dimension.
This example also shows that members of this class
need not be globally Lipschitz or satisfy a linear-growth
condition.

For a constant reference $y^*\in\R$, consider the PID controller
\begin{equation}\label{eq:pid}
 u=k_pe+k_i\eta+k_d\dot e,\qquad
 \dot\eta=e,\qquad e=y^*-x_1,
 \qquad (k_p,k_i,k_d)\in\R^3.
\end{equation}
Since the reference is constant, $\dot e=-x_2$.
The gains are to be selected using only the prescribed derivative
bounds, independently of the individual function $f$, the
reference, and the initial state.

\begin{definition}[Admissible PID gains]
\label{def:global}\label{def:admissible}
Let $\mathcal F\subset C^1(\R^3;\R)$ be an uncertainty class.
The PID controller~\eqref{eq:pid} with fixed gains
$(k_p,k_i,k_d)\in\R^3$ globally regulates the class of
systems~\eqref{eq:plant} defined by $f\in\mathcal F$
if, for every $f\in\mathcal F$, every constant reference
$y^*\in\R$, and every initial state
$(x_1(0),x_2(0),\eta(0))\in\R^3$, the closed-loop solution
exists for all $t\ge0$ and satisfies
\begin{equation}\label{eq:regulation}
 \lim_{t\to\infty}e(t)=0,\qquad
 \lim_{t\to\infty}\dot e(t)=0.
\end{equation}
A gain triple with this property is called admissible,
and the set of all such triples is called the admissible
PID gain region for this class of systems.
\end{definition}

We call the linear system with
\[
 f(x_1,x_2,u)=L_1x_1+L_2x_2+u\in \Fcal_{L_1,L_2}
\]
the \emph{boundary linear system}, since it attains both the
upper bounds on $f_{x_1}$ and $f_{x_2}$ and the lower bound on $f_u$
that define $\Fcal_{L_1,L_2}$.
The following theorem shows that this single linear system
determines the exact admissible PID gain region for the entire
infinite-dimensional nonlinear uncertainty class $\Fcal_{L_1,L_2}$.

\begin{theorem}\label{thm:scalar}
For the PID controller \eqref{eq:pid} with fixed gains
$(k_p,k_i,k_d)\in\R^3$, the following statements are equivalent:
\begin{itemize}
 \item[\rm i)] The PID controller globally regulates all
systems~\eqref{eq:plant} with $f\in\Fcal_{L_1,L_2}$.
 \item[\rm ii)] The PID controller globally regulates the boundary
linear system~\eqref{eq:plant} with
\[
 f(x_1,x_2,u)=L_1x_1+L_2x_2+u.
\]
 \item[\rm iii)] The PID gains satisfy
 $(k_p,k_i,k_d)\in\Opid$, where
 \begin{equation}\label{eq:Omega}
 \Opid:=\bigl\{(k_p,k_i,k_d)\in\R^3:
 k_i>0,\ k_p>L_1,\
 (k_p-L_1)(k_d-L_2)>k_i\bigr\}.
\end{equation}
\end{itemize}
\end{theorem}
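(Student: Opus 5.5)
We prove the cycle i) $\Rightarrow$ ii) $\Rightarrow$ iii) $\Rightarrow$ i). The first implication is immediate, since the boundary linear system $f=L_1x_1+L_2x_2+u$ satisfies $f_{x_1}=L_1$, $f_{x_2}=L_2$, $f_u=1$ and therefore belongs to $\Fcal_{L_1,L_2}$.

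For ii) $\Rightarrow$ iii), we write the closed loop of the boundary system in the coordinates $(e,\dot e,\eta)$, or equivalently in terms of $x_1$. With $\xi=\eta$ we have $\dot\xi=e$, $\ddot\xi=-x_2$, and
\[
\dddot\xi=-\dot x_2=-(L_1x_1+L_2x_2+k_pe+k_i\eta-k_dx_2).
\]
Substituting $x_1=y^*-\dot\xi$ and $x_2=-\ddot\xi$ gives the linear ODE
\[
\dddot\xi+(k_d-L_2)\ddot\xi+(k_p-L_1)\dot\xi+k_i\xi=-L_1y^*,
\]
with characteristic polynomial $s^3+(k_d-L_2)s^2+(k_p-L_1)s+k_i$. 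Global regulation requires $e=\dot\xi\to0$ and $\dot e=\ddot\xi\to0$ for every initial state and every reference.

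If $k_i=0$, the polynomial has a root at $s=0$. Taking $y^*\neq0$ forces $\dddot\xi+\dots$ to be driven by a constant, which makes $\dot\xi$ tend to a nonzero constant (or diverge). One also checks that nonzero initial modes with the other roots cannot all decay unless Hurwitz, which contradicts regulation. If $k_i\neq0$, any root with nonnegative real part yields a mode that is nonvanishing in $\dot\xi$, because $s\neq0$. Hence that root can be excited by some initial condition and regulation fails. Regulation therefore forces the polynomial to be Hurwitz, and Routh--Hurwitz gives $k_i>0$, $k_p-L_1>0$, $k_d-L_2>0$ and $(k_p-L_1)(k_d-L_2)>k_i$. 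The condition $k_d>L_2$ is implied by the others, so the result is exactly $\Opid$. A careful treatment of the $k_i=0$ case (with $k_p=L_1$ or other degenerate cases) is routine case analysis.

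For iii) $\Rightarrow$ i), which is the main step, fix $f\in\Fcal_{L_1,L_2}$ and $y^*$. First we find the equilibrium. We need $x_1=y^*$, $x_2=0$, and $\eta^*$ with $f(y^*,0,k_i\eta^*)=0$. Since $f_u\ge1$, the map $u\mapsto f(y^*,0,u)$ is a bijection of $\R$, and $k_i>0$ gives a unique $\eta^*$. Shifting coordinates with $x=\eta-\eta^*$, $y=\dot x=e$ and $z=\dot y=-x_2$ puts the closed loop in the companion form
\[
\dot z=-F(x,y,z):=f\bigl(y^*-y,\,-z,\,k_py+k_i(x+\eta^*)+k_dz\bigr),
\]
with $F(0,0,0)=0$. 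Its derivatives are
\[
F_x=k_if_u,\qquad F_y=k_pf_u-f_{x_1},\qquad F_z=k_df_u-f_{x_2}.
\]
Using $f_u\ge1$ together with the one-sided bounds, and writing $\lambda=f_u\ge1$, we obtain $F_y\ge k_p\lambda-L_1$ and $F_z\ge k_d\lambda-L_2$. This yields uniform derivative-ratio conditions of the form
\[
F_yF_z-\theta F_x\ge\varepsilon F_x,\qquad F_x>0,
\]
because $(k_p\lambda-L_1)(k_d\lambda-L_2)\ge\lambda(k_p-L_1)(k_d-L_2)\,\lambda\ge\lambda k_i+\text{margin}$ when $\lambda\ge1$. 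We will verify the monotonicity in $\lambda$ of $(k_p\lambda-L_1)(k_d\lambda-L_2)/\lambda$, which is increasing for $\lambda\ge1$ in the relevant regime.

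These are exactly the hypotheses of the structured Markus--Yamabe theorem of Section~\ref{sec:stability}. That theorem gives global existence and global asymptotic stability of the origin, so $y=e\to0$ and $z=\dot e\to0$. The main obstacle is matching the derived inequalities precisely to that theorem's hypotheses. The ratios must hold uniformly even though $f_u$ is unbounded, so we normalize by $F_x$ and control $F_y/F_x$ and $F_z$ separately.
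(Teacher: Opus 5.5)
Your steps i)$\Rightarrow$ii) and ii)$\Rightarrow$iii) are sound. For $k_i=0$, the cleanest argument is that $e,\dot e\to0$ would force $\ddot e\to-L_1y^*\neq0$. The paper avoids this case split by differentiating to the homogeneous $e$-equation and realizing arbitrary $(e,\dot e,\ddot e)(0)$ through the choice of $y^*$.

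The gap is in iii)$\Rightarrow$i). You combine your two lower bounds into a product inequality $F_yF_z\ge(1+\varepsilon)F_x$ and assert that this is exactly the hypothesis of the structured Markus--Yamabe theorem. It is not. Theorem~\ref{thm:MY} assumes the \emph{separate} linear ratio bounds $F_x\ge c$, $cF_y\ge aF_x$, $cF_z\ge dF_x$, with fixed constants $a,d,c>0$ and $ad>c$. A product bound is only a uniform form of ``the Jacobian is Hurwitz everywhere''. That is the classical Markus--Yamabe hypothesis, which does not give global asymptotic stability for general three-dimensional fields. It also does not imply the separate bounds, since one ratio can degenerate where the other is large. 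Without them, the proof of Theorem~\ref{thm:MY} cannot fix $\mu\in(c/a,d)$ so that $B_p\ge1$, $B_v\ge\varepsilon B_p$ and $B_y\ge\delta B_p$ hold for the storage construction. Your remark about controlling ``$F_y/F_x$ and $F_z$ separately'' also misses the point. It is $F_z/F_x$ that must be bounded below, precisely because $f_u$ is unbounded.

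The repair is already in your intermediate estimates: simply do not multiply them. With $\lambda=f_u\ge1$ and $L_1,L_2>0$, you have $F_y\ge k_p\lambda-L_1\ge(k_p-L_1)\lambda$, $F_z\ge k_d\lambda-L_2\ge(k_d-L_2)\lambda$, and $F_x=k_i\lambda\ge k_i$. Set $c=k_i$, $a=k_p-L_1$, $d=k_d-L_2$. Then these read $cF_y\ge aF_x$ and $cF_z\ge dF_x$; equivalently, $cF_y-aF_x=c(L_1f_u-f_{x_1})\ge0$ and $cF_z-dF_x=c(L_2f_u-f_{x_2})\ge0$. The condition $ad>c$ is the third inequality of $\Opid$. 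This is exactly the paper's verification, and your proposed monotonicity check in $\lambda$ becomes unnecessary.

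Finally, fix the sign in your definition. Since $\dot z=-\dot x_2=-f$, you need $F:=f\bigl(y^*-y,-z,k_py+k_i(x+\eta^*)+k_dz\bigr)$, not $-F:=f(\cdots)$. Your derivative formulas already correspond to the correct choice.
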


\begin{remark}\label{rem:equilibrium}
Fix $f\in\Fcal_{L_1,L_2}$, $y^*\in\R$, and
$(k_p,k_i,k_d)\in\Opid$.
Since $f_u\ge1$, the equation $f(y^*,0,u^*)=0$ has a unique
solution $u^*\in\R$.
Define
\begin{equation}\label{eq:shiftdefinition}
 \eta^*=\frac{u^*}{k_i},\qquad
 \zeta=\col(\eta-\eta^*,e,\dot e).
\end{equation}
The proof of Theorem~\ref{thm:scalar} establishes global asymptotic
stability of the regulation equilibrium
$(x_1,x_2,\eta)=(y^*,0,\eta^*)$.
In particular,
\begin{equation}\label{eq:limits}
 (x_1(t),x_2(t),\eta(t),u(t))
 \longrightarrow (y^*,0,\eta^*,u^*).
\end{equation}
\end{remark}

\subsection{Scalar affine systems}
Consider now
\begin{equation}\label{eq:affineplant}
 \dot x_1=x_2,\qquad \dot x_2=f(x_1,x_2)+bu,
 \qquad x_1,x_2,u\in\R,
\end{equation}
where $b\ge1$ is an unknown constant and
\begin{equation}\label{eq:class}
 \Fcal^{\mathrm{aff}}_{L_1,L_2}:=\bigl\{f\in C^1(\R^2;\R):
 f_{x_1}\le L_1,\quad f_{x_2}\le L_2
 \quad\text{on }\R^2\bigr\}.
\end{equation}

The superscript $``\mathrm{aff}"$ indicates that this class is used
to describe systems affine in the control input. Global regulation of this class is understood in the sense of
Definition~\ref{def:global}, with the quantifiers extending to all
pairs $(f,b)\in\Fcal^{\mathrm{aff}}_{L_1,L_2}\times[1,\infty)$.

\begin{corollary}\label{cor:affine}
For each fixed gain triple $(k_p,k_i,k_d)$, the PID
controller~\eqref{eq:pid} globally regulates every pair
$(f,b)\in\Fcal^{\mathrm{aff}}_{L_1,L_2}\times[1,\infty)$ if and only if
$(k_p,k_i,k_d)\in\Opid$.
\end{corollary}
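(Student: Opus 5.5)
Corollary~\ref{cor:affine} is a sandwich between Theorem~\ref{thm:scalar} and the boundary linear system. The sufficiency direction embeds the affine class into the nonaffine class, and the necessity direction uses the fact that the boundary linear system is itself an affine pair.

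\textbf{Sufficiency.} Let $(k_p,k_i,k_d)\in\Opid$ and take any pair $(f,b)\in\Fcal^{\mathrm{aff}}_{L_1,L_2}\times[1,\infty)$. Define
\[
 \tilde f(x_1,x_2,u):=f(x_1,x_2)+bu .
\]
Then $\tilde f\in C^1(\R^3;\R)$, with $\tilde f_{x_1}=f_{x_1}\le L_1$, $\tilde f_{x_2}=f_{x_2}\le L_2$, and $\tilde f_u=b\ge1$ on $\R^3$. Hence $\tilde f\in\Fcal_{L_1,L_2}$. System~\eqref{eq:affineplant} with the pair $(f,b)$ coincides with system~\eqref{eq:plant} with $\tilde f$, and the PID law~\eqref{eq:pid} is the same in both. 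The closed-loop trajectories are therefore identical for every reference $y^*$ and every initial state. By the implication iii)$\Rightarrow$i) of Theorem~\ref{thm:scalar}, the closed-loop solution exists for all $t\ge0$ and satisfies~\eqref{eq:regulation}. So the controller globally regulates the whole affine class.

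\textbf{Necessity.} Suppose the fixed gains globally regulate every pair in $\Fcal^{\mathrm{aff}}_{L_1,L_2}\times[1,\infty)$. Choose the particular pair $f(x_1,x_2)=L_1x_1+L_2x_2$ and $b=1$. This $f$ is $C^1$ with $f_{x_1}=L_1$ and $f_{x_2}=L_2$, so $(f,1)$ is admissible. The corresponding plant is exactly the boundary linear system $f(x_1,x_2,u)=L_1x_1+L_2x_2+u$ of Theorem~\ref{thm:scalar}. Global regulation of this single system is statement ii) of that theorem, and ii)$\Rightarrow$iii) gives $(k_p,k_i,k_d)\in\Opid$.

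\textbf{Remaining check.} No step is a genuine obstacle, since all the analytic work is contained in Theorem~\ref{thm:scalar}. The one point to verify is that the notion of global regulation for the affine class is the same as in Definition~\ref{def:global}, with quantifiers taken over all pairs $(f,b)$. This holds by the convention stated just before the corollary, so the correspondence between affine pairs and members of $\Fcal_{L_1,L_2}$ preserves the definition exactly.
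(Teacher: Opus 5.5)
Your proposal is correct and follows the paper's own proof: sufficiency by embedding $\widetilde f(x_1,x_2,u)=f(x_1,x_2)+bu$ into $\Fcal_{L_1,L_2}$ and applying Theorem~\ref{thm:scalar}, and necessity by noting that the pair $f=L_1x_1+L_2x_2$, $b=1$ is the boundary linear system, so implication ii)$\Rightarrow$iii) forces $(k_p,k_i,k_d)\in\Opid$. Your extra check that the regulation notion for the affine class is the one in Definition~\ref{def:global}, with quantifiers over all pairs $(f,b)$, is exactly the convention the paper states before the corollary.
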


\begin{proof}[Proof of Corollary~\ref{cor:affine}]
For sufficiency, let $f\in\Fcal^{\mathrm{aff}}_{L_1,L_2}$ and $b\ge1$.
The function
\(
 \widetilde f(x_1,x_2,u)=f(x_1,x_2)+bu
\)
belongs to $\Fcal_{L_1,L_2}$.
Theorem~\ref{thm:scalar} therefore guarantees global regulation
whenever $(k_p,k_i,k_d)\in\Opid$.

For necessity, the affine uncertainty class contains the linear system
\[
 f(x_1,x_2)=L_1x_1+L_2x_2,\qquad b=1.
\]
By Theorem~\ref{thm:scalar}, global regulation of this system
requires $(k_p,k_i,k_d)\in\Opid$.
Hence this condition is also necessary for global regulation
of the entire affine class.
\end{proof}

For autonomous affine plants, Corollary~\ref{cor:affine}
strengthens \cite[Theorem~1]{ZhaoGuo2017} by identifying
the exact gain region while relaxing $|f_{x_2}|\le L_2$
to $f_{x_2}\le L_2$.
Proposition~1 of that paper obtains the same region $\Opid$
under the additional restriction
$f(x_1,x_2)=h(x_1)+a(x_1)x_2$.
The present corollary removes this restriction, allowing general
$C^1$ dependence on both state variables under the stated
derivative bounds.

The gains in $\Opid$ guarantee global asymptotic stability of
the regulation equilibrium. The following example shows that
this equilibrium need not be globally exponentially stable.

\begin{proposition}\label{prop:noglobalexp}
Consider the scalar affine plant~\eqref{eq:affineplant} with
$f(x_1,x_2)=-x_1^3$, $b\ge1$, and $y^*=0$.
For every PID gain triple in $\Opid$, the regulation equilibrium
is globally asymptotically stable but not globally exponentially
stable.
More precisely, no constants $M\ge1$ and $\lambda>0$ satisfy
\begin{equation}\label{eq:globalexponential}
 \norm{\zeta(t)}
 \le M e^{-\lambda t}\norm{\zeta(0)},\qquad t\ge0,
\end{equation}
for all initial plant states, even when $\eta(0)=0$.
\end{proposition}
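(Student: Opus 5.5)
Global asymptotic stability is immediate from Theorem~\ref{thm:scalar} and Remark~\ref{rem:equilibrium}. The function $\widetilde f(x_1,x_2,u)=-x_1^3+bu$ satisfies $\widetilde f_{x_1}=-3x_1^2\le0\le L_1$, $\widetilde f_{x_2}=0\le L_2$ and $\widetilde f_u=b\ge1$, so it lies in $\Fcal_{L_1,L_2}$. For $y^*=0$ we have $u^*=0$ and $\eta^*=0$, so the regulation equilibrium is the origin of $\zeta=\col(\eta,e,\dot e)=\col(\eta,-x_1,-x_2)$. The substantive part is ruling out \eqref{eq:globalexponential}.

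The plan is a time-scale argument for large initial positions. The cubic stiffness makes the dynamics oscillate at frequency of order $|x_1|$, which is fast compared with the linear controller terms. Over a time window of fixed length, the PID terms therefore cannot remove a fixed fraction of the energy, and so no uniform rate $\lambda$ is possible. Concretely, take initial states $x_1(0)=r$, $x_2(0)=0$, $\eta(0)=0$, so that $\norm{\zeta(0)}=r$. Then show that there is $T>0$ with $\norm{\zeta(T)}\ge c\,r$ for all large $r$, where $c>0$ does not depend on $r$. This contradicts $M e^{-\lambda t}\le c$ for all large $t$ once we run the argument at arbitrary $T$. Equivalently, it suffices to show that for every fixed $T$ we have $\liminf_{r\to\infty}\norm{\zeta(T)}/r\ge c_T>0$ with $c_T$ not decaying like $e^{-\lambda T}$; in fact I aim for $c_T\to$ a positive constant independent of $T$.

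The key estimate uses the energy $E=\tfrac12x_2^2+\tfrac14x_1^4$. Along the closed loop $\dot x_2=-x_1^3+b(-k_px_1-k_d x_2+k_i\eta)$ and $\dot\eta=-x_1$, so
\[
\dot E=-bk_dx_2^2-bk_px_1x_2+bk_i\eta x_2 .
\]
Use $|x_1|\le (4E)^{1/4}$, $|x_2|\le(2E)^{1/2}$ and $|\eta(t)|\le\int_0^t|x_1|\le t(4E_{\max})^{1/4}$. This gives $|\dot E|\le C_1E+C_2(1+t)E^{3/4}$, with constants depending only on the gains and $b$. Comparison then gives $E(t)\ge e^{-C_1t}E(0)-C_3(t)E(0)^{3/4}$ on $[0,T]$, and $E(0)=r^4/4$ is large. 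Hence $E(T)\ge \tfrac12e^{-C_1T}r^4/4$ for $r$ large. Completeness on $[0,T]$ also follows from the same upper bound on $E$.

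The main obstacle is that the $-bk_dx_2^2$ term appears to give decay at rate $C_1$, which is exponential and would not contradict \eqref{eq:globalexponential}. To handle this, I convert energy into $\norm{\zeta}$ rather than comparing rates. At time $T$, $E(T)\gtrsim e^{-C_1T}r^4$ gives $\max(|x_1|,|x_2|^{1/2})\gtrsim e^{-C_1T/4}r$, so $\norm{\zeta(T)}\gtrsim e^{-C_1T/2}\min(r,r^2)$. Either $|x_1(T)|\gtrsim e^{-C_1T/4}r$, or $|x_2(T)|\gtrsim e^{-C_1T/2}r^2$, which exceeds $M e^{-\lambda T}r$ for large $r$. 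In the first case the fast oscillation lets me pass to a nearby time where $|x_2|\sim x_1^2\gg r$. Since the oscillation period is of order $1/r$, such a time exists within $O(1/r)$ of $T$, and $E$ is essentially unchanged over that interval. Therefore for each fixed $T$ there is $t\in[T,T+1]$ with $\norm{\zeta(t)}\ge c_T r^2>M e^{-\lambda t}r$ for large $r$. This contradiction shows that \eqref{eq:globalexponential} fails for every $M$ and $\lambda$, with $\eta(0)=0$ as claimed.
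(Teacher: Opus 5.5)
Your final argument is correct and uses the same mechanism as the paper. For a large initial position $r$, the cubic stiffness drives the velocity to order $r^2$ within time $O(1/r)$. By contrast, \eqref{eq:globalexponential} would cap $\norm{\zeta(t)}$ by $M\norm{\zeta(0)}=Mr$ for every $t\ge0$, so no decay rate enters at all. The paper makes this rigorous at $t=0$ via the rescaling $\tau=Rt$, $Y_R=y/R$, $Z_R=z/R^2$. Its limit system $X'=Y$, $Y'=Z$, $Z'=-Y^3$ yields $|z(\tau_0/R)|\ge c_0R^2$ by continuous dependence on parameters.

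\textbf{Unnecessary machinery.} Your energy estimates on $[0,T]$ and the case split are superfluous: take $T=0$, where your first case holds trivially. Your opening plan, a bound $\norm{\zeta(T)}\ge c_Tr$ with $c_T$ independent of $T$, could not succeed anyway. Averaged over the fast quartic oscillation, the term $-bk_dx_2^2$ makes the energy decay exponentially on the $O(1)$ time scale, as you suspected.

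\textbf{The unproved step.} What your proposal leaves unproved is exactly the step carrying all the weight: that the fast oscillation produces $|x_2|\gtrsim x_1^2$ within time $O(1/r)$. This is easy to close directly. Suppose $|x_2|\le r^2/4$ on $[0,1/r]$; the solution exists there by the global asymptotic stability part. Then on this interval $x_1\in[3r/4,5r/4]$ and $|\eta|\le 5/4$. Using $k_p>0$, this gives $\dot x_2\le-\tfrac{27}{64}r^3+O(r^2)$. Hence $x_2(1/r)\le-\tfrac{27}{64}r^2+O(r)<-r^2/4$ for large $r$, a contradiction. So some $t\in[0,1/r]$ has $\norm{\zeta(t)}\ge|x_2(t)|>r^2/4>Mr$ once $r>4M$. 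This elementary bootstrap can replace the paper's limit-system and continuous-dependence argument.
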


\subsection{Coupled affine systems: exact gain region and symmetry}
Let $n\ge1$, and consider
\begin{equation}\label{eq:vectorplant}
 \dot x_1=x_2,\qquad \dot x_2=f(x_1,x_2)+Bu,
 \qquad x_1,x_2,u\in\R^n,
\end{equation}
under the vector version of \eqref{eq:pid}, with $y^*\in\R^n$.
The gains $k_p,k_i,k_d$ are scalars and are the same in every
coordinate. The input matrix is constant and belongs to
\begin{equation}\label{eq:Bclass}
 \Bcal_n:=\{B\in\R^{n\times n}: B=B^\top\succeq I_n\}.
\end{equation}
For symmetric matrices, $P\preceq Q$ means that $Q-P$ is positive semidefinite. Write $D_jf$ for the $n\times n$ partial Jacobian with respect to $x_j$. The vector uncertainty class is
\begin{equation}\label{eq:vectorclass}
\begin{split}
 \Fcal^{n}_{L_1,L_2}:=
 \bigl\{f\in C^1(\R^{2n};\R^n):\;
 D_jf=D_jf^\top\preceq L_jI_n,\ j=1,2,\ \forall (x_1,x_2)\in\R^{2n}\bigr\}.
\end{split}
\end{equation}
Global regulation is understood as in Definition~\ref{def:global},
with the quantifiers extending to every pair
$(f,B)\in\Fcal^{n}_{L_1,L_2}\times\Bcal_n$, every $y^*\in\R^n$,
and every initial state $(x_1(0),x_2(0),\eta(0))\in\R^{3n}$.

\begin{theorem}\label{thm:vector}
Let $n\ge1$, $L_1,L_2>0$, and fix $(k_p,k_i,k_d)\in\R^3$. Then the following statements are equivalent:
\begin{itemize}
  \item[\rm i)] The PID controller globally regulates all
 systems~\eqref{eq:vectorplant} with
 \[
 (f,B)\in\Fcal^{n}_{L_1,L_2}\times\Bcal_n.
 \]
 \item[\rm ii)] The PID controller globally regulates the boundary
 linear system~\eqref{eq:vectorplant} with
 \[
 f(x_1,x_2)=L_1x_1+L_2x_2,\qquad B=I_n.
 \]
 \item[\rm iii)] The PID gains satisfy  $(k_p,k_i,k_d)\in\Opid$.
\end{itemize}
\end{theorem}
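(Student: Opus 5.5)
We prove the cycle i)$\Rightarrow$ii)$\Rightarrow$iii)$\Rightarrow$i).

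\textbf{The easy implications.} The implication i)$\Rightarrow$ii) is immediate, because the boundary pair $f=L_1x_1+L_2x_2$, $B=I_n$ has $D_1f=L_1I_n$, $D_2f=L_2I_n$ symmetric and attaining the bounds, and $I_n\in\Bcal_n$. For ii)$\Rightarrow$iii), the boundary linear system decouples into $n$ identical copies of the scalar boundary system of Theorem~\ref{thm:scalar}. Hence, by Theorem~\ref{thm:scalar}, iii) holds. Alternatively, the closed-loop characteristic polynomial of each copy is $s^3+(k_d-L_2)s^2+(k_p-L_1)s+k_i$, and regulation from every initial state and every $y^*$ forces it to be Hurwitz. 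Imaginary-axis roots must be excluded separately: a root at $0$ (the case $k_i=0$) or a purely imaginary pair gives nonconvergent solutions. Routh--Hurwitz then yields $\Opid$.

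\textbf{Setup for iii)$\Rightarrow$i).} Fix $(f,B)$, $y^*$, and gains in $\Opid$. Since $B\succeq I$ is invertible, set $u^*=-B^{-1}f(y^*,0)$ and $\eta^*=u^*/k_i$. In the shifted coordinates $x=\eta-\eta^*$, $y=e$, $z=\dot e$, the closed loop reads
\[
\dot x=y,\qquad \dot y=z,\qquad \dot z=-\bigl[g(y,z)+B(k_iy\cdot 0+k_ix+k_py+k_dz)\bigr]
\]
with the correct bookkeeping $\dot z=-\ddot x_1=-(f(y^*-y,-z)-f(y^*,0)+B(k_ix+k_py-k_dz))$. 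Here $g(y,z):=f(y^*-y,-z)-f(y^*,0)$ has symmetric Jacobians. The key observation is that $D_yg=-D_1f\succeq -L_1I$ and $D_zg=-D_2f\succeq -L_2I$ are symmetric. Symmetry of the full Jacobian of $g$ in $(y,z)$ is \emph{not} assumed, only of each block. Because of this, I would use a Lyapunov function built from integral potentials rather than a full potential of $g$.

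\textbf{Lyapunov construction and the main obstacle.} Change to the variable $w=B^{-1}$-weighted coordinates, or simply work with $B$-inner products, and define
\[
\Phi(y,z)=\int_0^1\bigl\langle g(sy,sz),(y,z)\bigr\rangle\,\mathrm ds
\]
in a form exploiting the symmetric blocks. Concretely, $P_1(y)=\int_0^1\langle g(sy,0),y\rangle\,\mathrm ds$ is a true potential of $g(\cdot,0)$, since $D_yg$ is symmetric, with $P_1(y)\ge -\tfrac{L_1}{2}|y|^2$. The dependence on $z$ enters only through $\langle g(y,z)-g(y,0),z\rangle\ge -L_2|z|^2$, which follows from monotonicity of $z\mapsto g(y,z)+L_2z$, itself a consequence of the symmetric block $D_zg\succeq-L_2I$.

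Mimicking the linear Lyapunov function for $s^3+a_2s^2+a_1s+a_0$, with $a_2=k_d-L_2$, $a_1=k_p-L_1$, $a_0=k_i$, I would take a candidate of the form
\[
V=\tfrac12\bigl|z+a_2y+\epsilon x\bigr|^2_{B^{-1}}+\bigl[P_1(y)+\tfrac{L_1}{2}|y|^2\bigr]+\dots
\]
The remaining parts are quadratic terms in $(x,y)$ with $B$-weights, chosen so that $\dot V\le-c(|y|^2+|z|^2)$. The Hurwitz inequality $a_1a_2>a_0$ supplies the needed room for a small $\epsilon>0$. The cross terms involving $g$ that do not telescope would be handled by the finite-dimensional maximization the introduction mentions: define $V$ via $\sup_\xi$ of a quadratic-minus-potential expression, so that $V$ is $C^1$, positive definite, and proper despite $g$ lacking growth bounds.

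The main obstacle is exactly this step: getting properness and a sign-definite $\dot V$ without Lipschitz bounds on $g$ and with $B$ noncommuting with $D_jg$. I would first try to realize all $g$-dependent terms as potentials in the single variables $y$ and $z$, using block symmetry. I would then use $B\succeq I$ only through the inequality $\langle Bv,v\rangle\ge|v|^2$ after rescaling $x\mapsto Bx$, so that the integral state absorbs $B$.

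Once $V$ is established, LaSalle's principle, with the largest invariant set in $\{y=z=0\}$ forcing $x=0$ because $B$ is invertible, gives global asymptotic stability. Forward completeness follows from properness of $V$. Together these give regulation, completing iii)$\Rightarrow$i).
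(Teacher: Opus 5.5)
\textbf{Gap in iii)$\Rightarrow$i).} Your implications i)$\Rightarrow$ii)$\Rightarrow$iii) are fine and agree with the paper: inclusion, then decoupling into copies of the scalar error equation and Routh--Hurwitz. The gap is in iii)$\Rightarrow$i), which carries the real content of the theorem. You never produce a Lyapunov function, and the part of the ansatz you do make explicit cannot work.

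Take $n=1$, $B=1$, $f(x_1,x_2)=-x_2^3$, which lies in $\Fcal^1_{L_1,L_2}$. Then your $g(y,z)=z^3$ and $g(\cdot,0)\equiv0$, so $P_1\equiv0$. The derivative of $\tfrac12(z+a_2y+\epsilon x)^2$ contains the quartic term $-(z+a_2y+\epsilon x)z^3$, and every other contribution is a quadratic form. At $x=0$, $z=-a_2y/2$ the quartic term equals $a_2^4y^4/16>0$. Hence $\dot V>0$ for large $|y|$, whatever $\epsilon$ and the quadratic part are.

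The reason is structural. The class gives no upper bound on $D_zg$. The only $z$-information you use, $\langle g(y,z)-g(y,0),z\rangle\ge-L_2|z|^2$, signs the pairing with $z$ alone, never with $a_2y+\epsilon x$. For $B\ne I_n$ it does not even sign $\langle B^{-1}z,g(y,z)-g(y,0)\rangle$. Deferring these terms to ``the finite-dimensional maximization'' without saying what is maximized, or why the maximum decreases along trajectories, leaves the difficulty untouched. Also, a max-type function is in general only continuous, not $C^1$, so your LaSalle step would have to be redone for a continuous, monotone $V$.

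\textbf{The missing idea.} The paper uses a specific normal form and a specific potential. Let $b=k_d-L_2$, $r=k_i/b$, $\delta=k_p-L_1-r$; these are all positive precisely when $(k_p,k_i,k_d)\in\Opid$. Set $s=-z-rBx$ and, in your sign convention for $g$, $N(y,z)=(k_p-r)By+(k_dB-bI_n)z+g(y,z)$. The closed loop becomes $\dot y=z$, $\dot z=-rBy+bs-N$, $\dot s=-bs+N$. The matrix $B\succeq I_n$ is absorbed into the bounds $D_yN\succeq\delta I_n$ and $D_zN\succeq0$, and both Jacobians are symmetric.

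$N$ has no joint potential. However, $v\mapsto N(w+v,-bv)$ has symmetric Jacobian $D_yN-bD_zN$. So $K(w,q)=\int_0^1\langle N(w+\theta q,-b\theta q),q\rangle\,d\theta$ satisfies $\nabla_qK=N(w+q,-bq)$ and $\langle q,\nabla_wK\rangle\ge\delta|q|^2$.

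Define $L=\tfrac12|z|^2+\tfrac r2y^\top By+K(y-q,q)-b\langle s,q\rangle$ with auxiliary flow $\dot q=z+bq$. Then $\tfrac{d}{dt}L=-\langle z+bq,N(y,z)-N(y,-bq)\rangle-b\langle q,\nabla_wK(y-q,q)\rangle\le-b\delta|q|^2$. Here the $z$-monotonicity is paired with exactly the difference of its own arguments, so the unbounded $z$-dependence never enters an uncontrolled cross term.

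Then $V=\max_qL$ is continuous, positive definite and proper, with coercivity in $q$ coming from the two Jacobian bounds. It is nonincreasing along solutions because the affine $q$-flow can be started so as to hit a maximizer at the final time. LaSalle's principle via $\omega$-limit sets concludes. (Minor: in your ``correct bookkeeping'' the $k_dz$ term should carry a plus sign, $\dot z=-g(y,z)-B(k_ix+k_py+k_dz)$.)
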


Thus the same region $\Opid$ and the same boundary linear test remain
exact in every dimension $n\geq 1$ under the stated assumptions.

The uncertainty class~\eqref{eq:vectorclass} requires both partial
Jacobians $D_1f$ and $D_2f$ to be symmetric.
We next remove either symmetry condition while retaining the upper
bound on the symmetric part and leaving the skew-symmetric part
unrestricted. The following proposition shows that even diagonal
PID gains cannot guarantee global regulation of either enlarged
class.

\begin{proposition}\label{prop:symmetry}
Let $n\ge2$ and write $\operatorname{sym}A=(A+A^\top)/2$.
Define
\begin{align}
 \mathcal C_1
 &:=\bigl\{f\in C^1(\R^{2n};\R^n):
 \operatorname{sym}D_1f\preceq L_1I_n,\quad
 D_2f=D_2f^\top\preceq L_2I_n\bigr\},\label{eq:C1}\\
 \mathcal C_2
 &:=\bigl\{f\in C^1(\R^{2n};\R^n):
 D_1f=D_1f^\top\preceq L_1I_n,\quad
 \operatorname{sym}D_2f\preceq L_2I_n\bigr\},\label{eq:C2}
\end{align}
with all conditions imposed on $\R^{2n}$.
For each $j=1,2$, no fixed triple of real diagonal matrices
$K_p,K_i,K_d\in\R^{n\times n}$ in the PID controller
\[
 u=K_pe+K_i\eta+K_d\dot e,\qquad
 \dot\eta=e,\qquad e=y^*-x_1,
\]
guarantees global regulation for all $f\in\mathcal C_j$,
even with $B=I_n$ and $\eta(0)=0$.
\end{proposition}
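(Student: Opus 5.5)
Given fixed diagonal gains $K_p,K_i,K_d$ and $B=I_n$, the plan is to exhibit, for each $j=1,2$, a \emph{linear} plant $f(x_1,x_2)=A_1x_1+A_2x_2$ in $\mathcal C_j$ whose closed loop has an eigenvalue with nonnegative real part. Linear plants suffice because such a plant violates global regulation. It suffices to work with $n=2$, embedding the construction in the first two coordinates; the remaining coordinates carry a harmless stable plant, for instance $f_k=-x_{1,k}-x_{2,k}$, or simply any plant that the chosen gains in those coordinates fail to regulate. Write $K_p=\mathrm{diag}(p_1,p_2)$, and similarly $K_i=\mathrm{diag}(i_1,i_2)$ and $K_d=\mathrm{diag}(d_1,d_2)$.

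First I reduce to gains in $\Opid$ coordinatewise. Taking $f$ diagonal and linear (for example $A_1=L_1I$, $A_2=L_2I$, which lies in both classes), the loop decouples into scalar boundary systems. Theorem~\ref{thm:scalar}, or the Routh--Hurwitz computation behind it, then forces $(p_k,i_k,d_k)\in\Opid$ for each $k$. Otherwise we are already done.

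Next comes the construction for the skew parts. For $\mathcal C_1$, take $A_2=0$ and $A_1=\begin{pmatrix}0&\omega\\-\omega&0\end{pmatrix}$, which has symmetric part $0\preceq L_1I$. For $\mathcal C_2$, take $A_1=0$ and $A_2$ skew in the same way. With $\zeta$ the shifted state, the closed-loop characteristic polynomial is
\[
\det\bigl(s^3I+s^2(K_d-A_2)+s(K_p-A_1)+K_i\bigr)=0 .
\]
For $\mathcal C_1$ this becomes $q_1(s)q_2(s)+\omega^2s^2=0$ with $q_k(s)=s^3+d_ks^2+p_ks+i_k$. For $\mathcal C_2$ it becomes $q_1q_2+\omega^2s^4=0$.

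The key step is to show that for $\omega$ large some root lies in the closed right half-plane. I plan to count roots as $\omega\to\infty$. For $\mathcal C_2$, the degree-6 polynomial $q_1q_2+\omega^2s^4$ has four roots that approach the roots of $s^4$ after rescaling: dividing by $\omega^2$ gives $s^4+\omega^{-2}q_1q_2$. Its two large roots behave like $s^2\approx-\omega^2$, that is, $s\approx\pm i\omega$. I will expand to the next order, $s=i\omega+\delta$, and determine the sign of $\mathrm{Re}\,\delta$. Alternatively, I can apply the Routh--Hurwitz or Hermite--Biehler test directly and show that one Hurwitz determinant becomes negative for large $\omega$. I expect this to be the main obstacle. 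Since $s^4$ near the origin contributes a root cluster at $0$ of multiplicity $4$, and $\pm i\omega$ give only two roots, the four small roots solve $\omega^2s^4\approx-q_1(0)q_2(0)=-i_1i_2<0$. They therefore approach $s=(i_1i_2)^{1/4}\omega^{-1/2}e^{i\pi/4+ik\pi/2}$, two of which have positive real part. This yields instability for all large $\omega$.

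For $\mathcal C_1$, the same trick applies with $\omega^2s^2$. The small roots satisfy $\omega^2s^2\approx-i_1i_2$, so they approach the imaginary axis, and I need the next-order correction. A cleaner route is to put the degree-6 coefficients into the Hurwitz test. The coefficient of $s^2$ grows like $\omega^2$ while all others stay bounded, and I would check that the determinant $\Delta_5$, or an appropriate lower one, turns negative for large $\omega$. As a fallback, I could choose a moderate $\omega$ tuned to the gains, for example making $s=i\mu$ a root, which requires $q_1(i\mu)q_2(i\mu)=\omega^2\mu^2$ to be real and positive; this can be arranged by intermediate-value arguments in $\mu$. Either way, an eigenvalue with $\mathrm{Re}\,s\ge0$ gives an initial condition with $\eta(0)=0$ whose solution does not converge. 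This follows by choosing the plant state along the real part of the eigenvector, or by linearity using a shifted reference $y^*$.
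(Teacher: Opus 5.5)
\textbf{The $\mathcal C_1$ case has a real gap: the route you commit to finds only stable roots.} Your $\mathcal C_2$ argument is essentially the paper's. The four small roots of $q_1q_2+\omega^2s^4$ satisfy $\omega^2s^4\approx-i_1i_2$; the paper rescales $\lambda=m^{-1/2}\mu$ and obtains the limit $c_1c_2+\mu^4$. Two of these roots lie in the open right half-plane, so the expansion of the large roots near $\pm i\omega$ is unnecessary.

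For $\mathcal C_1$ you miscount where the roots go. For $q_1q_2+\omega^2s^2$, only \emph{two} roots are small and \emph{four} are large. The two small roots are stable. Write $q_1q_2(s)=i_1i_2+(p_1i_2+p_2i_1)s+O(s^2)$. The next-order correction is then $s=\pm i\sqrt{i_1i_2}/\omega-(p_1i_2+p_2i_1)/(2\omega^2)+\dots$, whose real part is negative because $p_k>L_1>0$. So the small-root expansion you plan cannot produce an unstable eigenvalue.

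The instability comes from the four large roots, which balance $s^6$ against $\omega^2s^2$, so $s^4\approx-\omega^2$. The paper rescales $s=\sqrt\omega\,\mu$ and obtains $\omega^{-3}P(\sqrt\omega\,\mu)\to\mu^2(\mu^4+1)$. This limit has the simple root $(1+i)/\sqrt2$, and root continuity then gives a closed-loop eigenvalue of order $\sqrt\omega$ in the right half-plane.

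Your Hurwitz fallback would also close the gap cheaply. Only the $s^2$ coefficient $a_4$ contains $\omega^2$, so
$\Delta_3=a_1a_2a_3-a_1^2a_4-a_3^2+a_1a_5\to-\infty$.
Hence the polynomial is not Hurwitz, and since $a_6=i_1i_2\neq0$ the offending root is nonzero. As written, however, neither this nor the imaginary-axis tuning is carried out, so the decisive step for $\mathcal C_1$ is still open.

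\textbf{A smaller gap: realizing the unstable mode in your $\mathcal C_2$ example.} You take $A_1=0$ there. The shifted integral state is $X=\eta+K_i^{-1}A_1y^*=\eta$, so $\eta(0)=0$ forces $X(0)=0$, and $y^*$ drops out of the error dynamics entirely. Shifting the reference therefore does nothing. Starting along the real part of an eigenvector $(v,\lambda v,\lambda^2v)$ is also unavailable, because its $X$-component $v$ is nonzero. You have two ways to repair this:
\begin{itemize}
\item Take $A_1=L_1I_2$, as the paper does. Then $A_1$ is invertible, and $y^*=A_1^{-1}K_iv_0$ realizes any shifted initial state $(v_0,v_1,v_2)$ with $\eta(0)=0$.
\item Alternatively, show that $\{X=0\}$ is not contained in the stable invariant subspace. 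By $\dot X=Y$, any invariant subspace containing $\{X=0\}$ has arbitrary $X$-components, so it is all of $\R^6$.
\end{itemize}
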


\section{Global Stability of Structured Nonlinear Systems}
\label{sec:stability}

This section establishes two global asymptotic stability results
used in the sufficiency proofs of the main theorems.
Theorem~\ref{thm:MY} treats the third-order dynamics arising in
the scalar nonaffine problem and supplies the key step in
Theorem~\ref{thm:scalar}.
Its proof uses the two-dimensional storage function constructed
in Appendix~\ref{app:storage}.
Lemma~\ref{lem:normalform} treats the coupled affine dynamics
and supplies the key step in Theorem~\ref{thm:vector}.
Both results are stated independently of the PID problem.

\subsection{A structured Markus--Yamabe theorem}
\label{subsec:nonaffine-MY}

We first establish a global asymptotic stability result for
third-order integrators under derivative-ratio
conditions.

\begin{theorem}\label{thm:MY}
Let $F\in C^1(\R^3;\R)$. Suppose that there exist constants
$a,d,c>0$ with $ad>c$ such that
\begin{equation*}\tag{D}\label{eq:MYratios}
 F(0,0,0)=0,\qquad
 F_x\ge c,\qquad cF_y\ge aF_x,\qquad cF_z\ge dF_x
 \quad\text{on }\R^3.
\end{equation*}
Then the origin of
\begin{equation}\label{eq:MYnonaffine}
 \dot x=y,\qquad \dot y=z,\qquad \dot z=-F(x,y,z)
\end{equation}
is globally asymptotically stable.
\end{theorem}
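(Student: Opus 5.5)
The proof will build a Lyapunov-type argument that mirrors the linear case, exploiting the ratio structure in \eqref{eq:MYratios}. Write $F(x,y,z)=\int_0^1 \nabla F(sx,sy,sz)\cdot(x,y,z)\dd s$. The ratio conditions then say that the averaged coefficients $\alpha=\int F_x$, $\beta=\int F_y$, $\gamma=\int F_z$ satisfy $\alpha\ge c$, $\beta\ge a\alpha/c$ and $\gamma\ge d\alpha/c$. In other words, $F(x,y,z)=\alpha(x+\tfrac{\beta}{\alpha}y+\tfrac{\gamma}{\alpha}z)$, where the normalized coefficients dominate $a/c$ and $d/c$. Let $\sigma=x+\tfrac{a}{c}y+\tfrac{d}{c}z$ be the linear combination associated with the boundary system $\dot z=-(cx+ay+dz)$. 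I would introduce transformed coordinates $(x,y,w)$ with $w=cx+ay+dz$ (or equivalently $\sigma$). The strategy is to show that the dynamics split into a two-dimensional part in transformed variables, controlled by a storage function, plus a scalar part controlled by comparison.

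\textbf{Key steps.} First, I would check local existence and uniqueness, which hold since $F\in C^1$. The linearization at $0$ is Hurwitz: the characteristic polynomial $s^3+F_zs^2+F_ys+F_x$ has $F_zF_y\ge adF_x^2/c^2>F_x$ because $F_x\ge c$ and $ad>c$. In fact the Jacobian is Hurwitz everywhere, which gives local asymptotic stability, so the whole task is globalization. Second, I would write the system in coordinates $(p,q,w)$, with $(p,q)$ a linear image of $(x,y)$, so that $\dot p,\dot q$ are linear in $(p,q,w)$ and $\dot w=-d\,F+(a\,z+c\,y)$. Substituting the monotonicity from the ratio conditions, the $w$-equation has the form $\dot w=-\mu(t)w+(\text{terms in }p,q)$ with $\mu\ge d$. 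Scalar comparison then bounds $|w|$ by the supremum of $|p|,|q|$ plus exponentially decaying initial data; this also yields forward completeness once $(p,q)$ is controlled. Third, for the planar part I would invoke the two-dimensional storage function $V(p,q)$ of Appendix~\ref{app:storage}. It is obtained as a viscosity solution of a Hamilton--Jacobi equation $\sup_{\text{admissible ratios}}\nabla V\cdot g(p,q;\text{ratios})\le -W(p,q)$, where $g$ ranges over the vector fields compatible with \eqref{eq:MYratios}. I would add a quadratic energy term in $w$ to get $U=V(p,q)+\kappa w^2$ and derive a dissipation inequality $\dot U\le -W(p,q)-\kappa' w^2$ in the viscosity/Dini sense along trajectories. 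Fourth, $U$ is proper, which gives boundedness of all solutions. A LaSalle-type argument, or integrating the dissipation, gives convergence of $(p,q,w)$ to $0$. Combined with local asymptotic stability, this yields global asymptotic stability.

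\textbf{Main obstacle.} The hard step is the storage function. The worst case over the nonlinear coefficient family is not a single linear system, so no quadratic function works uniformly. One must solve a game-type Hamilton--Jacobi equation, possibly nonsmooth, and show that the viscosity solution is positive definite, proper, and satisfies the dissipation inequality along Carath\'eodory solutions. My first attempt would be to take the value function of the worst-case (maximal-ratio) planar dynamics. I would verify the comparison principle for it via \cite{CrandallIshiiLions1992} and use the strict inequality $ad>c$ as the margin that makes $W$ positive. Cross terms between $w$ and $(p,q)$ in $\dot U$ would be absorbed by choosing $\kappa$ large, using the comparison estimate from the second step.
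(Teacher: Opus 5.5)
\textbf{There is a genuine gap, and it lies in your choice of splitting.}

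You keep (a linear image of) $(x,y)$ as the planar variables and put $w=cx+ay+dz$ into the scalar equation. Then $z=(w-cx-ay)/d$.

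\emph{The planar part carries no nonlinearity.} The subsystem $\dot x=y$, $\dot y=(w-cx-ay)/d$ is a fixed Hurwitz linear system driven by $w$. It does not involve $F$ at all, so there is nothing for a Hamilton--Jacobi storage function to do there. All of the nonlinearity sits in $\dot w=cy+az-dF$.

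\emph{The scalar equation is not contracting.} Holding $(x,y)$ fixed, $\partial_w\dot w=a/d-F_z$. The term $az$ contributes $+a/d$, so (D) only gives $\partial_w\dot w\le a/d-d$, not $\le-d$, and this bound can be positive. For instance, $F=x+4y+z$ satisfies (D) with $c=d=1$, $a=4$, and yields $\dot w=3w-4x-15y$. Take a trajectory through a point with $x=y=0$, $w\neq0$. Any $V(p,q)\ge0$ with $V(0,0)=0$ satisfies $V(p(t),q(t))\ge V(0,0)$, while $\tfrac{d}{dt}(\kappa w^2)=6\kappa w^2>0$. Hence no $U=V(p,q)+\kappa w^2$ with $\kappa>0$ is nonincreasing along trajectories, let alone satisfies $\dot U\le-W(p,q)-\kappa' w^2$.

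\emph{The comparison step fails as well.} Even when the $w$-equation is contracting, the cross coefficients can be unbounded relative to the self-rate. For example, $\partial_y\dot w$ contains $-dF_y$, and $F_y$ has no upper bound. So comparison gives no linear-gain estimate of $|w|$ by $\sup(|p|,|q|)$. Since $(p,q)$ is in turn driven by $w$, the two bounds are circular.

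\textbf{The missing idea.} Choose the scalar variable so that its self-coefficient is governed by $F_x$, the only derivative with a uniform positive lower bound. Then bound the remaining two variables without any prior bound on it.

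The paper fixes $\mu\in(c/a,d)$ and sets $r=c/\mu$, $\delta=a-r$, $\varepsilon=d-\mu$. It uses coordinates $(y,v,p)$ with $v=z$ and $p=cx+\mu z$. This gives $\dot y=v$, $\dot v=-ry-B$, $\dot p=-\mu B$, where $B=F-ry$ satisfies $B_p=F_x/c\ge1$, $B_y\ge\delta B_p$, and $B_v\ge\varepsilon B_p$.

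The storage function $W(y,p)$ is not a worst case over admissible ratios. It solves, for this particular $B$, the matching equation $W_y=B(y,-\mu W_p,p)$, with $W_p(W_y-\delta W_p)\ge0$ and $0\le W\le[p-p_0(y)]^2/(2\mu\varepsilon)$. With $E=\tfrac12(v^2+ry^2)$, the cross terms collapse into the monotone difference $B(y,v,p)-B(y,-\mu W_p,p)$. The derivative of $E+W$ along the flow is then at most $-\varepsilon(v+\mu W_p)^2-\mu\delta W_p^2\le-\kappa v^2$ almost everywhere, with $\kappa=\varepsilon\delta/(\delta+\mu\varepsilon)$.

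Because $W\ge0$, this bounds $(y,v)$ by the initial data alone. Only then does comparison bound $p$, via $\dot p=-\mu b(t)p-\mu B(y,v,0)$ with $b\ge1$ and $|B(y,v,0)|$ bounded on a compact set. This is a compactness-based nonlinear gain, not a linear one. LaSalle's principle, with dissipation only in $v$, then finishes the proof.

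Your ingredients---a Hamilton--Jacobi storage function, a quadratic energy, and scalar comparison---are the right ones, but they are attached to the wrong variables.
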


In the application to scalar PID regulation, we take
\[
 a=k_p-L_1,\qquad d=k_d-L_2,\qquad c=k_i.
\]
With this choice, $ad>c$ is exactly the third inequality defining
$\Opid$. The derivative conditions in \eqref{eq:MYratios} follow
from the prescribed bounds on the plant derivatives, as verified
in the proof of Theorem~\ref{thm:scalar}.

\begin{proof}[Proof of Theorem~\ref{thm:MY}]
By \eqref{eq:MYratios}, $F_x,F_y,F_z>0$ and
\[
 F_yF_z\ge\frac{ad}{c^2}F_x^2
 \ge\frac{ad}{c}F_x>F_x.
\]
The characteristic polynomial of the Jacobian is
$\lambda^3+F_z\lambda^2+F_y\lambda+F_x$.
The cubic Routh--Hurwitz criterion  therefore shows that the Jacobian
Hurwitz at every point. It remains to prove global asymptotic stability.

\step{Step 1: Coordinate transformation.}
Since $ad>c$, we may choose $\frac ca<\mu<d$. Set
\[
  r=\frac c\mu,\qquad
 \delta=a-r,\qquad \varepsilon=d-\mu.
\]
Introduce the invertible linear change of coordinates
\begin{equation}
 q=(y,v,p),\qquad v=z,\qquad p=cx+\mu z,
 \label{eq:MYtransform}
\end{equation}
and define
\[
 B(y,v,p)=F\left(\frac{p-\mu v}{c},y,v\right)-ry.
\]
Then $B\in C^1(\R^3)$, $B(0)=0$, and the original system becomes
\begin{align}\label{standard}
 \dot y=v,\qquad \dot v=-ry-B(y,v,p),\qquad \dot p=-\mu B(y,v,p).
\end{align}
We shall prove that the origin of the standard form \eqref{standard} is globally asymptotically stable. By \eqref{eq:MYratios},
\[
 \begin{aligned}
 B_p&=\frac{F_x}{c}\ge1,\\
 B_v&=F_z-\mu B_p\ge(d-\mu)B_p=\varepsilon B_p,\\
 B_y&=F_y-r\ge aB_p-r
     =(a-r)B_p+r(B_p-1)\ge\delta B_p.
 \end{aligned}
\]
By Lemma~\ref{lem:storage} in Appendix~\ref{app:storage}, there exists a locally Lipschitz function $W:\R^2\to\R$ satisfying \eqref{eq:storage-value} and \eqref{eq:storage-match}, where $p_0(y)$ satisfies
\(
 B(y,0,p_0(y))=0
\).
The implicit function theorem gives $p_0\in C^1(\R)$; moreover, $B(0)=0$ and uniqueness of the zero imply $p_0(0)=0$. Define
\[
 E(y,v)=\frac12(v^2+ry^2),\qquad
 \kappa=\frac{\varepsilon\delta}{\delta+\mu\varepsilon}>0,
\]
and
\[
 V(y,v,p)=E(y,v)+W(y,p),\quad
 f_B(y,v,p)=\bigl(v,-ry-B(y,v,p),-\mu B(y,v,p)\bigr).
\]

\step{Step 2: Dissipation along every trajectory.}
Note that $V$ is locally Lipschitz and $V\ge E\ge0$.
At a point where $W$ is differentiable, write $\alpha=W_p$ and $h=W_y$. The storage function lemma gives
\[
 h=B(y,-\mu\alpha,p),\qquad \alpha h\ge\delta\alpha^2.
\]
Hence
\[
 \begin{aligned}
 \nabla V\cdot f_B
=vh-(v+\mu\alpha)B(y,v,p)=-(v+\mu\alpha)\bigl[B(y,v,p)-B(y,-\mu\alpha,p)\bigr]-\mu\alpha h.
 \end{aligned}
\]
Since $B_v\ge\varepsilon B_p\ge\varepsilon$,
\[
 (v+\mu\alpha)\bigl[B(y,v,p)-B(y,-\mu\alpha,p)\bigr]
 \ge\varepsilon(v+\mu\alpha)^2.
\]
Combining this inequality with $\alpha h\ge\delta\alpha^2$ and completing the square in $\alpha$, we obtain
\[
 \nabla V\cdot f_B\le-\varepsilon(v+\mu\alpha)^2-\mu\delta\alpha^2
 \le-\frac{\varepsilon\delta}{\delta+\mu\varepsilon}v^2=-\kappa v^2.
\]
Since $W$ is locally Lipschitz, Rademacher's theorem yields
\begin{equation}
 \nabla V(q)\cdot f_B(q)\le-\kappa v^2
 \qquad\text{almost everywhere in $\R^3$.}
 \label{eq:MYae}
\end{equation}
Since $f_B\in C^1$, each initial condition $q_{\mathrm{in}}\in \mathbb{R}^3$ determines a unique maximal solution $q(t)$, whose interval of existence is denoted by $[0,T_{\max})$. Fix $0<T<T_{\max}$ and choose a closed ball $K$ large enough to contain the trajectory segment
$q([0,T])$ and all points at distance at most \(1\) from this segment. For an open set $\Omega\subset\R^d$, let $C_c^\infty(\Omega)$ denote the space of smooth functions with compact support in $\Omega$. Choose a nonnegative kernel $\rho\in C_c^\infty(B_1(0))$ satisfying $\int_{\R^3}\rho=1$, and set
\[
 \rho_\nu(\xi)=\nu^{-3}\rho(\xi/\nu),\qquad
 V_\nu=\rho_\nu*V,\qquad g(q)=\kappa v^2
 \qquad(0<\nu<1).
\]
Because $V$ is locally Lipschitz, $V_\nu$ is smooth, converges locally uniformly to $V$, and has gradient equal to the convolution of the weak gradient of $V$ with $\rho_\nu$. By \eqref{eq:MYae}, at every point \(q\) on the trajectory segment,
\begin{equation}
 \begin{aligned}
 \nabla V_\nu(q)\cdot f_B(q)
 &\le-(\rho_\nu*g)(q)+\int_{\R^3}\rho_\nu(\xi)\nabla V(q-\xi)\cdot
     [f_B(q)-f_B(q-\xi)]\dd\xi\\
 &\le-g(q)+C_T\nu.
 \end{aligned}
 \label{eq:MYmollified}
\end{equation}
Here $C_T<\infty$ is independent of $\nu$: the gradient $\nabla V$ is essentially bounded on $K$, and $f_B,g\in C^1$ are Lipschitz on this closed ball, so both convolution errors are $O(\nu)$. Applying the chain rule to the smooth function $V_\nu$ along the trajectory and integrating, we obtain
\[
 V_\nu(q(t))+\kappa\int_s^t v(\tau)^2\dd\tau
 \le V_\nu(q(s))+C_T\nu(t-s)
 \qquad(0\le s\le t\le T).
\]
Letting $\nu\downarrow0$ and using the arbitrariness of $0<T<T_{\max}$, we conclude that
\begin{equation}
 V(q(t))+\kappa\int_s^t v(\tau)^2\dd\tau\le V(q(s))
 \qquad(0\le s\le t<T_{\max}).
 \label{eq:MYdissipation}
\end{equation}
In particular, $W\ge0$ and \eqref{eq:storage-value} imply
\begin{equation}
 E(y(t),v(t))+\kappa\int_0^t v(\tau)^2\dd\tau\le C_0
 \qquad(0\le t<T_{\max}),
 \label{eq:MYenergy}
\end{equation}
where
\[
 C_0=C_0(q_{\mathrm{in}}):=
 E(y_{\mathrm{in}},v_{\mathrm{in}})
 +\frac{[p_{\mathrm{in}}-p_0(y_{\mathrm{in}})]^2}{2\mu\varepsilon}.
\]

\step{Step 3: Forward completeness and Lyapunov stability.}
Fix an arbitrary initial condition. By \eqref{eq:MYenergy},
\begin{equation}
 |y(t)|\le\sqrt{2C_0/r},\qquad |v(t)|\le\sqrt{2C_0}
 \qquad(0\le t<T_{\max}).
 \label{eq:MYyvbound}
\end{equation}
For $C\ge0$, define
\[
 K_C=\{(y,v):E(y,v)\le C\},\qquad
 M_C=\max_{(y,v)\in K_C}|B(y,v,0)|<\infty.
\]

Since $B_p\ge1$, along the trajectory we have
\[
 \dot p+\mu b(t)p=-\mu B(y(t),v(t),0),\qquad
 b(t)=\int_0^1 B_p(y(t),v(t),\theta p(t))\dd\theta\ge1.
\]
Since $|B(y(t),v(t),0)|\le M_{C_0}$, the integrating factor method on any finite interval of existence gives
\begin{equation}
 \begin{aligned}
 |p(t)|\le e^{-\mu t}|p_{\mathrm{in}}|+(1-e^{-\mu t})M_{C_0}
 \le\max\{|p_{\mathrm{in}}|,M_{C_0}\}\qquad(0\le t<T_{\max}).
 \end{aligned}
 \label{eq:MYpbound}
\end{equation}
Together with \eqref{eq:MYyvbound}, this shows that the entire maximal trajectory remains in a compact set. The continuation theorem for ordinary differential equations therefore implies that all solutions of the standard form are forward complete and bounded.

We next show that these estimates also establish Lyapunov stability of the origin. Indeed, since $p_0(0)=0$ and $p_0$ is continuous,
\[
 C_0(q_{\mathrm{in}})\longrightarrow0\qquad(q_{\mathrm{in}}\to0).
\]
Furthermore, since
\[
 K_C\subset\bigl\{(y,v):|y|\le\sqrt{2C/r},\ |v|\le\sqrt{2C}\bigr\}
\]
and $B(0)=0$, continuity of $B$ at the origin implies
\(
 M_C\longrightarrow0\quad(C\downarrow0).
\)
Thus \eqref{eq:MYyvbound}--\eqref{eq:MYpbound} yield
\[
 \sup_{t\ge0}\|q(t)\|^2
 \le\frac{2C_0}{r}+2C_0+\max\{|p_{\mathrm{in}}|,M_{C_0}\}^2
 \longrightarrow0\qquad(q_{\mathrm{in}}\to0).
\]

\step{Step 4: Global attractivity.}
By Step 3, the standard form generates a continuous semiflow on $\R^3$, and every forward trajectory is bounded and hence relatively compact. The function $V$ is continuous and, by \eqref{eq:MYdissipation}, nonincreasing along every trajectory.

Suppose that $q(t)=(y(t),v(t),p(t))$ is a trajectory along which $V(q(t))$ is constant. By \eqref{eq:MYdissipation},
\[
 0\le\kappa\int_0^t v(s)^2\dd s
 \le V(q(0))-V(q(t))=0\qquad(t\ge0).
\]
Continuity of $v$ implies $v(t)\equiv0$. The equations of the standard form then give
\[
 y(t)\equiv Y,\qquad B(Y,0,p(t))=-rY.
\]
Since $B_p\ge1$, the function $p\mapsto B(Y,0,p)$ is strictly increasing, so $p(t)\equiv P$. Hence
\(
 0=\dot p=-\mu B(Y,0,P)=\mu rY,
\)
which implies $Y=0$. Moreover,
\(
 B(0,0,P)=0=B(0,0,0)
\)
and $B_p\ge1$ imply $P=0$. Thus the only complete trajectory along which $V$ is constant is the equilibrium at the origin. By LaSalle's invariance principle, every trajectory converges to the origin. Together with Step 3, this proves global asymptotic stability of the origin of the standard form. The invertible linear transformation \eqref{eq:MYtransform} then implies that the origin of the original system \eqref{eq:MYnonaffine} is also globally asymptotically stable.
\end{proof}

\subsection{A stability criterion for vector affine systems}
\label{subsec:vector-stability}

The vector problem requires a separate argument for the coupled
dynamics. Symmetry of the two partial Jacobians supplies
potential identities, and maximizing an auxiliary function yields
a continuous, positive definite, proper Lyapunov function.
The following normal form is the one needed in the proof of
Theorem~\ref{thm:vector}.

\begin{lemma}\label{lem:normalform}
Let $r,b,\delta>0$, $B=B^\top\succeq I_n$, and $N\in C^1(\R^{2n};\R^n)$ satisfy $N(0,0)=0$ and
\begin{equation}\label{eq:Nassumptions}
 D_yN=D_yN^\top\succeq\delta I_n,\qquad
 D_zN=D_zN^\top\succeq0
 \quad\text{on }\R^{2n}.
\end{equation}
Then the origin of
\begin{equation}\label{eq:normalform}
 \dot y=z,\qquad \dot z=-rBy+bs-N(y,z),\qquad
 \dot s=-bs+N(y,z)
\end{equation}
is globally asymptotically stable.
\end{lemma}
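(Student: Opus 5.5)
We plan to mirror the proof of Theorem~\ref{thm:MY}: build a locally Lipschitz (ideally continuous, positive definite, proper) function $V(y,z,s)=E(y,z)+W(y,s)$, derive a dissipation inequality of the form $\dot V\le-\kappa|z|^2$ (or $-\kappa|z-\alpha|^2$), and conclude with LaSalle's invariance principle. The natural energy is
\[
 E(y,z)=\tfrac12|z|^2+\tfrac r2\,y^\top By .
\]
Along \eqref{eq:normalform} it satisfies $\dot E=z^\top\bigl(bs-N(y,z)\bigr)$, because the term $-rz^\top By$ cancels against $r y^\top B\dot y$ by symmetry of $B$. The storage $W(y,s)$ must therefore absorb the cross term $z^\top(bs-N)$. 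At a point of differentiability, write $\alpha=\nabla_sW$ and $h=\nabla_yW$. A direct computation gives
\[
 \nabla V\cdot\dot{(y,z,s)}=(z-\alpha)^\top\bigl(bs-N(y,z)\bigr)+h^\top z .
\]
If $W$ solves the first-order PDE $h=N(y,\alpha)-bs$, this becomes
\[
 -(z-\alpha)^\top\bigl(N(y,z)-N(y,\alpha)\bigr)+\alpha^\top\bigl(N(y,\alpha)-bs\bigr).
\]
The first term is nonpositive because $D_zN$ is symmetric positive semidefinite, so $N(y,\cdot)$ is monotone. The second term has to be made $\le-c|\alpha|^2$ (or comparable) using the structure of $W$.

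Unlike the scalar case, we will not invoke viscosity solutions. Instead we construct $W$ explicitly by finite-dimensional maximization, exploiting the two symmetry hypotheses in \eqref{eq:Nassumptions}. Symmetry of $D_yN$ makes $N(\cdot,z)$ a gradient in $y$, with potential $\Phi(y,z)=\int_0^1 y^\top N(ty,z)\dd t$. Symmetry and semidefiniteness of $D_zN$ make $N(y,\cdot)$ the gradient of a convex potential in $z$. The plan is to set
\[
 W(y,s)=\max_{\alpha\in\R^n}\Bigl[\,b\,s^\top\alpha-\Psi(y,\alpha)\Bigr]+\text{(correction)},
\]
a Legendre-type transform in the $z$-slot, with $\Psi$ built from these integral potentials. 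The aim is that the envelope theorem yields $\nabla_sW=\alpha^*$ and $\nabla_yW=N(y,\alpha^*)-bs$ at the maximizer $\alpha^*$. Strong monotonicity $D_yN\succeq\delta I_n$ and $N(0,0)=0$ should then give coercivity, so that $W$ is positive definite and proper. It should also give a sign estimate such as $\alpha^\top(N(y,\alpha)-bs)\le-\delta'|\alpha|^2$.

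Once $V$ is in hand, we copy Steps~2--4 of the proof of Theorem~\ref{thm:MY}:
\begin{itemize}
 \item mollify $V$ to pass from the almost-everywhere inequality to an integrated dissipation inequality along trajectories;
 \item use properness of $V$ to obtain forward completeness, boundedness and Lyapunov stability;
 \item show that $V$ constant forces $z\equiv0$.
\end{itemize}
From $z\equiv0$ we get $y\equiv Y$ constant and $bs=N(Y,0)+rBY$. Strict monotonicity in $y$ then forces $s$ constant, and $\dot s=0$ gives $rBY=0$, hence $Y=0$ and $s=0$. LaSalle's invariance principle then yields global asymptotic stability.

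The main obstacle is the middle step: finding the exact maximization formula so that the envelope identities hold. The difficulty is that no cross symmetry between $D_yN$ and $D_zN$ is assumed, so there is no single joint potential. We would try first a two-stage construction: take a potential in $y$ at fixed $\alpha$, then maximize over $\alpha$. We would check whether the cross derivatives cancel at the maximizer; if they do not, we would adjust the quadratic weights in $E$, for instance scaling $s$ by $b$ or $B$, until they do.
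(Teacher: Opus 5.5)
Your architecture is correct. The paper's Lyapunov function is exactly $V=E(y,z)+W(y,s)$ with $W(y,s)=\max_{q\in\R^n}\bigl[K(y-q,q)-b\ip sq\bigr]$, and at a maximizer $q^*$ it satisfies your PDE $\nabla_yW=N(y,\alpha)-bs$ with $\alpha=\nabla_sW=-bq^*$. But the proposal stops where the proof begins. The heuristic you give for finding $W$ points the wrong way, and the problem is not one of weights. If $W=\max_\alpha[s^\top\alpha+g(y,\alpha)]$, the envelope and first-order conditions turn your PDE into the system $\nabla_yg-b\nabla_\alpha g=N(y,\alpha)$; rescaling $s$ only reparametrizes it. This operator differentiates along the affine slices $\{\alpha+by=u\}$, not in $y$ at fixed $\alpha$. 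A potential of $N(\cdot,\alpha)$ at fixed $\alpha$ therefore leaves the term $-b\nabla_\alpha g$, which equals $bs$ at the maximizer, uncancelled. The system is solvable because, for fixed $u$, the map $\sigma\mapsto N(\sigma,u-b\sigma)$ has Jacobian $D_yN-bD_zN$, which is symmetric as a combination of two symmetric matrices. No cross-symmetry between $D_yN$ and $D_zN$ is needed. That potential, normalized to vanish at $q=0$, is the paper's $K(w,q)=\int_0^1\ip{N(w+\theta q,-b\theta q)}{q}\dd\theta$, with $\nabla_qK=N(w+q,-bq)$, $\nabla_wK=H$ and $\ip q{H(w,q)}\ge\delta\norm q^2$. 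Integrating both monotonicity hypotheses along segments gives $K(y-q,q)-b\ip sq\le\ip{N(y,0)-bs}q-\frac\delta2\norm q^2$. This makes the maximum finite and $V$ positive definite and proper. Your sign estimate then appears as $\alpha^\top h=-b\,(q^*)^\top H(y-q^*,q^*)\le-\frac\delta b\norm\alpha^2$.

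Second, the dissipation you aim for, $\dot V\le-\kappa\norm z^2$ or $-\kappa\norm{z-\alpha}^2$, is not available, because $D_zN$ is only semidefinite. When $N$ does not depend on $z$, the first term in your identity vanishes identically. For instance, with $n=1$ and $N=\delta y$ one gets $V=\frac12z^2+\frac r2By^2+\frac{(\delta y-bs)^2}{2\delta}$ and $\dot V=-\frac b\delta(\delta y-bs)^2$, which vanishes wherever $bs=\delta y$, whatever $z$ is. The only strict dissipation is $-\frac\delta b\norm{\nabla_sW}^2$, an a.e.-defined gradient of a nonsmooth max-function. Unlike $\kappa v^2$ in Theorem~\ref{thm:MY}, it is not a continuous function of the state, so copying the mollification step does not by itself turn constancy of $V$ into $z\equiv0$. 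The paper avoids differentiating $V$ altogether. It lifts each trajectory by $\dot q=z+bq$ and shows $\frac{\dd}{\dd t}L\le-b\delta\norm q^2$ along every lifted trajectory, not just at maximizers. It then runs the lift backward from a maximizer at time $t_1$, which gives $V(\xi(t_1))\le V(\xi(t_0))-b\delta\int_{t_0}^{t_1}\norm q^2\dd t$. Constancy of $V$ thus forces $q\equiv0$, hence $z=\dot q-bq\equiv0$, after which your final LaSalle computation goes through. A Danskin-type one-sided derivative argument could replace this, but it would have to be supplied.
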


\begin{proof}
For $w,q\in\R^n$, define
\[
 K(w,q)=\int_0^1\ip{N(w+\theta q,-b\theta q)}q\,\dd\theta,
 \qquad
 H(w,q)=\int_0^1D_yN(w+\theta q,-b\theta q)^\top q\,\dd\theta.
\]
Then $K\in C^1$ and $\nabla_wK=H$. For fixed $w$, the map
$M_w(v)=N(w+v,-bv)$ has symmetric Jacobian
$DM_w=D_yN-bD_zN$. Therefore
\[
 \begin{aligned}
 \nabla_qK(w,q)
 &=\int_0^1\bigl[M_w(\theta q)
       +\theta DM_w(\theta q)^\top q\bigr]\dd\theta\\
 &=\int_0^1\frac{\dd}{\dd\theta}
       \bigl[\theta M_w(\theta q)\bigr]\dd\theta
 =N(w+q,-bq).
 \end{aligned}
\]
The lower bound on $D_yN$ also gives
$\ip q{H(w,q)}\ge\delta\norm q^2$.

Define
\[
 E(y,z)=\frac12\norm z^2+\frac r2y^\top By,
 \qquad
 L(y,z,s,q)=E(y,z)+K(y-q,q)-b\ip sq.
\]
We use the function
\begin{equation}\label{eq:boxedV}
\begin{aligned}
 V(y,z,s)&=\max_{q\in\R^n}L(y,z,s,q)\\
 &=\frac12\norm z^2+\frac r2y^\top By
 +\max_{q\in\R^n}\left\{
 \int_0^1\ip{N(y-(1-\theta)q,-b\theta q)}q\,\dd\theta
 -b\ip sq\right\}.
\end{aligned}
\end{equation}
To justify this definition, integrate the two matrix inequalities in
\eqref{eq:Nassumptions} along straight segments. They yield
\[
 \begin{aligned}
 \ip q{N(y-(1-\theta)q,-b\theta q)}
 &\le\ip q{N(y,-b\theta q)}
      -\delta(1-\theta)\norm q^2\\
 &\le\ip q{N(y,0)}-\delta(1-\theta)\norm q^2.
 \end{aligned}
\]
Consequently, with $\Phi(y,s,q)=K(y-q,q)-b\ip sq$,
\begin{equation}\label{eq:vectorcoercivity}
 \Phi(y,s,q)\le\ip{N(y,0)-bs}q-\frac\delta2\norm q^2.
\end{equation}
Thus the maximum is attained. Since $\Phi(y,s,0)=0$, every
maximizer satisfies
\[
 \norm{q_*}\le\frac{2}{\delta}\norm{N(y,0)-bs}.
\]
In a neighborhood of any fixed $(y,z,s)$, all maximizers therefore
belong to one compact ball. Maximizing the continuous function $L$
over this ball shows that $V$ is continuous.

Taking $q=0$ gives $V\ge E\ge0$. At the origin,
\eqref{eq:vectorcoercivity} gives $V(0,0,0)=0$.
If $(y,z)\ne(0,0)$, then $E>0$. If $y=z=0$ and $s\ne0$,
then $\nabla_q\Phi(0,s,0)=-bs\ne0$, so
$\max_q\Phi(0,s,q)>0$. Hence $V$ is positive definite.
It is also proper. Indeed, $V\le R$ bounds $y,z$ because
$B\succeq I_n$. On the resulting bounded set of $y$, continuity
gives a constant $D_R$ such that
$|K(y-q,q)|\le D_R$ whenever $\norm q=1$.
For $s\ne0$, the trial vector $q=-s/\norm s$ gives
$V\ge-D_R+b\norm s$. Thus $V\le R$ also bounds $s$, and
every sublevel set of $V$ is compact.

Fix any solution $\xi(t)=(y(t),z(t),s(t))$ on a compact interval
of its maximal existence interval, and augment it by
$\dot q=z+bq$. Set $w=y-q$, so $\dot w=-bq$.
Symmetry of $B$ gives $\dot E=-\ip z{\dot s}$.
The identities for $K$ imply
\begin{align}
 \frac{\dd}{\dd t}L(\xi(t),q(t))
 &=-\ip{z+bq}{N(y,z)-N(y,-bq)}
   -b\ip q{H(y-q,q)}\notag\\
 &\le-b\delta\norm q^2.
 \label{eq:vectorLdissipation}
\end{align}
Here $D_zN\succeq0$ implies
$\ip{z+bq}{N(y,z)-N(y,-bq)}\ge0$.

For $t_0<t_1$, choose any maximizer $q_1$ defining
$V(\xi(t_1))$. The affine map
\[
 q(t_0)\longmapsto q(t_1)
 =e^{b(t_1-t_0)}q(t_0)
  +\int_{t_0}^{t_1}e^{b(t_1-\tau)}z(\tau)\,\dd\tau
\]
is a bijection, so there is a unique auxiliary trajectory with
$q(t_1)=q_1$. Integrating \eqref{eq:vectorLdissipation} along it
and using $L(\xi(t_0),q(t_0))\le V(\xi(t_0))$ yields
\begin{equation}\label{eq:vectorendpoint}
 V(\xi(t_1))\le V(\xi(t_0))
 -b\delta\int_{t_0}^{t_1}\norm{q(t)}^2\,\dd t.
\end{equation}
In particular, $V$ is nonincreasing along every solution. Its compact
sublevel sets and the continuation theorem imply forward
completeness and boundedness.

We identify the trajectories along which $V$ is constant.
For any nontrivial interval of such a trajectory,
\eqref{eq:vectorendpoint} forces $q\equiv0$.
The auxiliary equation then gives $z\equiv0$, so $y$ is constant.
The equation for $\dot z$ gives $bs=rBy+N(y,0)$, so $s$ is also
constant. Adding the equations for $\dot z$ and $\dot s$ gives
$rBy=0$. Since $B\succ0$, this implies $y=0$ and then $s=0$.
Thus the origin is the only trajectory on which $V$ is constant.

Every solution has a nonempty compact invariant omega-limit set.
Continuity and monotonicity of $V$ make $V$ constant on this set,
which must therefore consist only of the origin. Every solution
converges to the origin. Finally, for any $\epsilon>0$, the minimum
of $V$ on the sphere $\norm\xi=\epsilon$ is positive.
Continuity at the origin gives a neighborhood where $V$ is smaller
than this minimum, and monotonicity prevents any trajectory starting
there from reaching the sphere. This proves Lyapunov stability and
completes the proof of global asymptotic stability.
\end{proof}

\Needspace{6\baselineskip}
\section{Proof of the Main Results}\label{sec:proofs}

\subsection{Scalar systems}
\begin{proof}[Proof of Theorem~\ref{thm:scalar}]
\emph{Step 1: \textup{(i)} $\Rightarrow$ \textup{(ii)}.}
The boundary linear system belongs to $\Fcal_{L_1,L_2}$,
so this implication follows directly from inclusion.

\emph{Step 2: \textup{(ii)} $\Rightarrow$ \textup{(iii)}.}
For the boundary linear system, the tracking error satisfies
\[
 \ddot e+(k_d-L_2)\dot e+(k_p-L_1)e+k_i\eta=-L_1y^*.
\]
Differentiating and using $\dot\eta=e$ gives
\begin{equation}\label{eq:linearerror}
 e^{(3)}+(k_d-L_2)\ddot e+(k_p-L_1)\dot e+k_ie=0.
\end{equation}
Since $L_1>0$, every initial triple
$(e(0),\dot e(0),\ddot e(0))=(r_0,r_1,r_2)\in\R^3$
can be realized with $\eta(0)=0$ by choosing
\begin{equation}\label{eq:realize}
 \begin{aligned}
 y^*&=-\frac{r_2+(k_p-L_1)r_0+(k_d-L_2)r_1}{L_1},\\
 x_1(0)&=y^*-r_0,\qquad x_2(0)=-r_1.
 \end{aligned}
\end{equation}
Thus, because \textup{(ii)} applies in particular to zero integral
initialization for every constant reference and plant initial state,
every solution of
\eqref{eq:linearerror} converges to zero.
Its characteristic polynomial
\begin{equation}\label{eq:polynomial}
 \lambda^3+(k_d-L_2)\lambda^2+(k_p-L_1)\lambda+k_i
\end{equation}
must therefore be Hurwitz.
The cubic Routh--Hurwitz criterion gives exactly
$(k_p,k_i,k_d)\in\Opid$, proving \textup{(iii)}.

\emph{Step 3: \textup{(iii)} $\Rightarrow$ \textup{(i)}.}
Assume \textup{(iii)} and fix $f\in\Fcal_{L_1,L_2}$
and $y^*\in\R$.
Since $f_u\ge1$, the map $u\mapsto f(y^*,0,u)$ is strictly
increasing and surjective, so it has a unique zero $u^*$.
Introduce the coordinates
\begin{equation}\label{eq:nonaffinetransform}
 x=\eta-u^*/k_i,\qquad y=e,\qquad z=\dot e.
\end{equation}
The closed-loop system becomes
\[
 \dot x=y,\qquad \dot y=z,\qquad \dot z=-F(x,y,z),
\]
where
\begin{equation}\label{eq:nonaffineF}
 F(x,y,z)
 =f(y^*-y,-z,u^*+k_ix+k_py+k_dz).
\end{equation}
Clearly, $F\in C^1(\R^3;\R)$ and $F(0,0,0)=0$.
Set
\begin{equation}\label{eq:nonaffinecoefficients}
 c=k_i,\qquad a=k_p-L_1,\qquad d=k_d-L_2.
\end{equation}
By \eqref{eq:Omega}, $a,d,c>0$ and $ad>c$.
With the derivatives of $f$ evaluated at the arguments
in \eqref{eq:nonaffineF}, we obtain
\begin{align*}
 F_x&=cf_u\ge c,\\
 cF_y-aF_x
 &=c\bigl(L_1f_u-f_{x_1}\bigr)\ge0,\\
 cF_z-dF_x
 &=c\bigl(L_2f_u-f_{x_2}\bigr)\ge0.
\end{align*}
Thus $F$ satisfies \eqref{eq:MYratios}.
Theorem~\ref{thm:MY} therefore establishes forward completeness
and global asymptotic stability of the origin.
Returning to the original variables gives \eqref{eq:limits}
for every initial state, and hence global regulation.
This proves \textup{(i)} and completes the equivalence.

\end{proof}

\begin{proof}[Proof of Proposition~\ref{prop:noglobalexp}]
The function $f(x_1,x_2)=-x_1^3$ belongs to $\Fcal^{\mathrm{aff}}_{L_1,L_2}$ because
\[
 \partial_{x_1}f=-3x_1^2\le L_1,\qquad \partial_{x_2}f=0\le L_2.
\]
Fix any gain triple in $\Opid$, any $b\ge1$, and take $y^*=0$. Set $\kappa_j=bk_j$ for $j=p,i,d$. In the shifted coordinates $x=\eta$, $y=e$, $z=\dot e$, the closed loop is
\begin{equation}\label{eq:cubiccounter}
 \dot x=y,\qquad \dot y=z,\qquad
 \dot z=-\kappa_i x-\kappa_p y-\kappa_d z-y^3.
\end{equation}
Its origin is globally asymptotically stable by Corollary~\ref{cor:affine}. We show that no constants $M\ge1$, $\lambda>0$ can give a global estimate
\begin{equation}\label{eq:globalexpfalse}
 \norm{\zeta(t)}\le M e^{-\lambda t}\norm{\zeta(0)},\qquad
 t\ge0,\quad\zeta(0)\in\R^3.
\end{equation}

For $R>0$, choose $x(0)=0$, $y(0)=R$, $z(0)=0$, so that $\eta(0)=0$ and $\norm{\zeta(0)}=R$. Introduce the rescaled time and states
\[
 \tau=Rt,\qquad
 X_R(\tau)=x(\tau/R),\quad
 Y_R(\tau)=\frac{y(\tau/R)}R,\quad
 Z_R(\tau)=\frac{z(\tau/R)}{R^2}.
\]
With primes denoting differentiation with respect to $\tau$, equation~\eqref{eq:cubiccounter} becomes
\begin{equation}\label{eq:rescaledcounter}
 \begin{gathered}
 X_R'=Y_R,\qquad Y_R'=Z_R,\\
 Z_R'=-Y_R^3-\frac{\kappa_d}{R}Z_R-\frac{\kappa_p}{R^2}Y_R
       -\frac{\kappa_i}{R^3}X_R,\\
 (X_R(0),Y_R(0),Z_R(0))=(0,1,0).
 \end{gathered}
\end{equation}
As $R\to\infty$, the vector fields converge in $C^1$ on compact sets to
\[
 X'=Y,\qquad Y'=Z,\qquad Z'=-Y^3,
 \qquad (X(0),Y(0),Z(0))=(0,1,0).
\]
Since $Z'(0)=-1$, choose a fixed sufficiently small $\tau_0>0$ such that $Z(\tau_0)<0$, and set $c_0=-Z(\tau_0)/2>0$. Continuous dependence of local solutions on parameters gives
\[
 Z_R(\tau_0)\longrightarrow Z(\tau_0),
\]
so $|Z_R(\tau_0)|\ge c_0$ for all sufficiently large $R$. Returning to the original variables,
\[
 \norm{\zeta(\tau_0/R)}\ge |z(\tau_0/R)|
 =R^2|Z_R(\tau_0)|\ge c_0R^2.
\]
But \eqref{eq:globalexpfalse} would imply
\[
 c_0R^2\le M e^{-\lambda\tau_0/R}R\le MR,
\]
which is impossible as $R\to\infty$. The nonlinearity, reference, and gains are fixed throughout; only the initial plant state varies. This proves the failure of global exponential stability even with zero integral initialization.
\end{proof}

\subsection{Coupled systems}

\begin{proof}[Proof of Theorem~\ref{thm:vector}]
\emph{Step 1: \textup{(i)} $\Rightarrow$ \textup{(ii)}.}
The uncertainty class contains the linear system
\[
 f(x_1,x_2)=L_1x_1+L_2x_2,\qquad B=I_n.
\]
Thus \textup{(i)} implies \textup{(ii)}.

\emph{Step 2: \textup{(ii)} $\Rightarrow$ \textup{(iii)}.}
For the linear system in \textup{(ii)}, the coordinates are
decoupled, and each satisfies the scalar error equation
\eqref{eq:linearerror}.
The Routh--Hurwitz criterion therefore requires
\[
 k_i>0,\qquad k_p>L_1,\qquad
 (k_p-L_1)(k_d-L_2)>k_i,
\]
which is precisely \textup{(iii)}.

\emph{Step 3: \textup{(iii)} $\Rightarrow$ \textup{(i)}.}
Assume \textup{(iii)}, and fix an admissible pair $(f,B)$
and any setpoint $y^*\in\R^n$.
Introduce
\begin{equation}\label{eq:vectortransform}
 \begin{gathered}
 x=\eta+\frac1{k_i}B^{-1}f(y^*,0),\qquad
 y=e,\qquad z=\dot e,\\
 g(y,z)=-f(y^*-y,-z)+f(y^*,0).
 \end{gathered}
\end{equation}
The error dynamics become
\begin{equation}\label{eq:vectorchain}
 \dot x=y,\qquad \dot y=z,\qquad
 \dot z=-k_iBx-k_pBy-k_dBz+g(y,z).
\end{equation}
Set
\[
 b=k_d-L_2>0,\qquad
 r=\frac{k_i}{b}>0,\qquad
 \delta=k_p-L_1-r>0,
\]
and define
\begin{equation}\label{eq:vectorN}
 \begin{aligned}
 s&=-z-rBx,\\
 N(y,z)&=(k_p-r)By+(k_dB-bI_n)z-g(y,z).
 \end{aligned}
\end{equation}
Since $br=k_i$, the transformed equations are
\[
 \dot y=z,\qquad
 \dot z=-rBy+bs-N(y,z),\qquad
 \dot s=-bs+N(y,z),
\]
which have the form \eqref{eq:normalform}.
Moreover, $N(0,0)=0$, and the assumptions on $f$ and $B$ give
\begin{align}
 D_yN
 &=(k_p-r)B-D_yg
 \succeq(k_p-r-L_1)I_n
 =\delta I_n,\notag\\
 D_zN
 &=k_dB-bI_n-D_zg
 \succeq(k_d-b-L_2)I_n
 =0.
 \label{eq:vectorNbounds}
\end{align}
Both partial Jacobians are symmetric.
Thus all the hypotheses of Lemma~\ref{lem:normalform}
are satisfied, and the origin in $(y,z,s)$ is globally
asymptotically stable.
Since the coordinate transformation is invertible,
the original closed-loop system achieves global regulation
for every initial state.
This proves \textup{(i)} and completes the equivalence.
\end{proof}

\begin{proof}[Proof of Proposition~\ref{prop:symmetry}]
Write
\[
 K_\nu=\operatorname{diag}(k_{\nu,1},\ldots,k_{\nu,n}),
 \qquad \nu\in\{p,i,d\}.
\]
Both enlarged classes contain the boundary linear system
$f=L_1x_1+L_2x_2$, $B=I_n$.
Under diagonal PID gains, its coordinates evolve independently.
The scalar linear test in Theorem~\ref{thm:scalar} therefore
forces every candidate gain triple to satisfy
\[
 (k_{p,\ell},k_{i,\ell},k_{d,\ell})\in\Opid,
 \qquad \ell=1,\ldots,n.
\]
Otherwise, this boundary system already gives a counterexample.

We first construct counterexamples for $n=2$. Set
\[
 a_\ell=k_{p,\ell}-L_1>0,\qquad
 d_\ell=k_{d,\ell}-L_2>0,\qquad
 c_\ell=k_{i,\ell}>0,\qquad \ell=1,2,
\]
and define
\[
 p_\ell(\lambda)
 =\lambda^3+d_\ell\lambda^2+a_\ell\lambda+c_\ell,
 \qquad
 J=\begin{pmatrix}0&-1\\1&0\end{pmatrix}.
\]
For a linear plant $f(x_1,x_2)=A_1x_1+A_2x_2$ with $B=I_2$,
the shifted variables
\[
 X=\eta+K_i^{-1}A_1y^*,\qquad Y=e,\qquad Z=\dot e
\]
satisfy
\[
 \dot X=Y,\qquad \dot Y=Z,\qquad
 \dot Z=-K_iX-(K_p-A_1)Y-(K_d-A_2)Z.
\]
Thus the closed-loop characteristic polynomial is
\[
 \det\!\left(
 \lambda^3I_2+(K_d-A_2)\lambda^2
 +(K_p-A_1)\lambda+K_i
 \right).
\]

\emph{Removing the symmetry of $D_1f$.}
For $m>0$, choose
\[
 A_1=L_1I_2+mJ,\qquad A_2=L_2I_2.
\]
Since $\operatorname{sym}A_1=L_1I_2$ and $A_2=L_2I_2$,
this plant belongs to $\mathcal C_1$.
Its characteristic polynomial is
\[
 P_m(\lambda)
 =\det\begin{pmatrix}
 p_1(\lambda)&m\lambda\\
 -m\lambda&p_2(\lambda)
 \end{pmatrix}
 =p_1(\lambda)p_2(\lambda)+m^2\lambda^2.
\]
With $\lambda=\sqrt m\,\mu$, we obtain
\[
 m^{-3}P_m(\sqrt m\,\mu)
 \longrightarrow \mu^6+\mu^2
 =\mu^2(\mu^4+1)
 \qquad\text{as }m\to\infty.
\]
The limiting polynomial has the simple root
$\mu_0=(1+i)/\sqrt2$, whose real part is positive.
Continuity of a simple root gives roots $\mu_m\to\mu_0$
of the scaled polynomial. Hence
$\lambda_m=\sqrt m\,\mu_m$ is a closed-loop eigenvalue
with positive real part for all sufficiently large $m$.

\emph{Removing the symmetry of $D_2f$.}
Choose instead
\[
 A_1=L_1I_2,\qquad A_2=L_2I_2+mJ.
\]
Now $A_1=L_1I_2$ and $\operatorname{sym}A_2=L_2I_2$,
so this plant belongs to $\mathcal C_2$.
Its characteristic polynomial becomes
\[
 Q_m(\lambda)
 =\det\begin{pmatrix}
 p_1(\lambda)&m\lambda^2\\
 -m\lambda^2&p_2(\lambda)
 \end{pmatrix}
 =p_1(\lambda)p_2(\lambda)+m^2\lambda^4.
\]
With $\lambda=m^{-1/2}\mu$, we have
\[
 Q_m(m^{-1/2}\mu)
 \longrightarrow c_1c_2+\mu^4
 \qquad\text{as }m\to\infty.
\]
The limiting polynomial has the simple root
\[
 \mu_0=(c_1c_2)^{1/4}\frac{1+i}{\sqrt2},
\]
whose real part is positive.
The same simple-root argument yields roots $\mu_m\to\mu_0$,
so $\lambda_m=m^{-1/2}\mu_m$ is an unstable closed-loop
eigenvalue for all sufficiently large $m$.

These unstable eigenvalues prevent regulation even with
$\eta(0)=0$. Indeed, $A_1$ is invertible in both examples,
and any real shifted initial state $(v_0,v_1,v_2)$ can be
realized by choosing
\[
 y^*=A_1^{-1}K_iv_0,\qquad
 x_1(0)=y^*-v_1,\qquad
 x_2(0)=-v_2,\qquad
 \eta(0)=0.
\]
Every eigenvector associated with a nonzero eigenvalue has
a nonzero error component, as follows from $\dot X=Y$
and $\dot Y=Z$.
An unstable mode therefore gives a real initial state for
which the regulation error does not converge to zero.

For $n>2$, place either counterexample in the first two
coordinates and append
\[
 f_\ell=L_1x_{1,\ell}+L_2x_{2,\ell},
 \qquad \ell=3,\ldots,n.
\]
The resulting plant remains in the corresponding class.
Since the gains are diagonal, the first two coordinates
remain independent of the others and retain the unstable mode.
Thus every fixed diagonal gain triple fails on a linear
member of each enlarged class.
\end{proof}

\section{Conclusion}\label{sec:conclusion}

In this paper, we have developed a mathematical theory and an
explicit gain design method for classical PID regulation of a basic
class of second-order nonaffine uncertain systems.
The main analytical tool is a structured Markus--Yamabe theorem,
proved by combining a two-dimensional storage construction through
Hamilton--Jacobi viscosity solutions with scalar comparison.
This theorem establishes the sufficiency of the PID gain conditions
imposed by a boundary linear system, while inclusion of that system
in the uncertainty class gives necessity.
Consequently, for each fixed PID gain triple, global regulation of
the entire nonlinear class is equivalent to global regulation of
this single linear system.

The resulting design conditions are explicit, necessary and
sufficient, and depend only on the prescribed derivative bounds.
They guarantee global  stability and asymptotic regulation
for all admissible plant, constant reference, and initial condition.
We have also extended the same exact characterization of
scalar PID gains to multidimensional affine systems under
the stated symmetry assumptions.
If either state-Jacobian symmetry assumption is removed while
the corresponding skew-symmetric part is left unrestricted,
no fixed diagonal PID controller guarantees global regulation
of the resulting enlarged class, even with $B=I_n$.

For future investigation, it would be interesting to extend
the single-linear-test principle to more general nonlinear
uncertainty classes and to determine whether the derivative
conditions in our structured Markus--Yamabe theorem
can be weakened.
Further questions include exact PID gain characterizations in the
presence of input delays and under sampled-data implementation
with a prescribed sampling rate.

\newpage
\section*{Appendix}

\subsection{ Hamilton--Jacobi initial value problem}
\label{app:HJ}

We record the comparison and existence result used to construct
the storage function in Lemma~\ref{lem:storage}, together with
the viscosity-solution terminology needed below.

Let $U:\mathbb R\times[0,T]\to\mathbb R$ be continuous.
Fix an interior point $(\xi_0,s_0)\in\mathbb R\times(0,T)$. A $C^1$ function $\varphi$, defined in a neighborhood of this point, is said to touch $U$ from above (from below) at $(\xi_0,s_0)$ if $\varphi(\xi_0,s_0)=U(\xi_0,s_0)$
and $U\le\varphi$ ($U\ge\varphi$) in some neighborhood of the point.
Such a function is called a test function.
If, for every such interior point and every test function $\varphi$ touching $U$
from above (respectively, from below), we have $\varphi_s(\xi_0,s_0)
 +H\bigl(\xi_0,s_0,\varphi_\xi(\xi_0,s_0)\bigr)\le0$ (respectively, $\ge0$),
then $U$ is called a viscosity subsolution
(respectively, a viscosity supersolution) of the equation $U_s+H(\xi,s,U_\xi)=0$.
A continuous function that is both a viscosity subsolution and a viscosity supersolution is called a viscosity solution.
This definition does not require $U$ to be differentiable.

\begin{lemma}\label{lem:HJ}
Let $H\in C^1(\R\times[0,\infty)\times\R)$. Suppose that there exist constants $L,M>0$ such that, for every $T>0$, there is a constant $A_T<\infty$ for which $|H_q(\xi,s,q)|\le L$, $|H_\xi(\xi,s,q)|\le A_T(1+|q|)$, and $-Ms\le H(\xi,s,0)\le0$ hold for all $\xi,q\in\R$ and $0\le s\le T$.
Write $Q_T=\R\times[0,T]$. Throughout, spatial uniform continuity means uniform continuity with respect to $\xi$, uniformly for $s\in[0,T]$. Then the following statements hold.
\begin{enumerate}
\renewcommand{\labelenumi}{(\roman{enumi})}
\item The initial value problem $U_s+H(\xi,s,U_\xi)=0$ for $s>0$, with $U(\xi,0)=0$, admits a continuous viscosity solution $U:\R\times[0,\infty)\to\R$ such that $0\le U(\xi,s)\le Ms^2/2$ and $U$ is spatially uniformly continuous on every $Q_T$.
\item Let $u,v\in C(Q_T)$ be bounded viscosity sub- and supersolutions, respectively, of the equation on $\R\times(0,T)$. If at least one of them is spatially uniformly continuous and $u(\cdot,0)\le v(\cdot,0)$, then $u\le v$ on $Q_T$.
\end{enumerate}
\end{lemma}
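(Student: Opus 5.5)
The plan is to follow the standard Crandall--Ishii--Lions route, adapted to the growth conditions at hand: prove the comparison statement (ii) first, then obtain existence (i) by Perron's method using explicit barriers.

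\textbf{Comparison (ii).} Let $u$ be the sub- and $v$ the supersolution, and suppose $v$ is spatially uniformly continuous with modulus $\omega$; the case where $u$ is uniformly continuous is symmetric. Assume $\sup_{Q_T}(u-v)=\sigma>0$. I would double the variables and consider
\[
\Psi(\xi,\zeta,s,\tau)=u(\xi,s)-v(\zeta,\tau)-\frac{|\xi-\zeta|^2}{2\epsilon}-\frac{|s-\tau|^2}{2\epsilon}-\beta\sqrt{1+\xi^2}-\gamma s,
\]
with small $\beta,\gamma>0$. The term $\beta\sqrt{1+\xi^2}$ has bounded derivative at most $\beta$, and together with boundedness of $u,v$ it forces a maximum to exist.

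The key steps are as follows.
\begin{enumerate}
\item Show that the maximum point satisfies $|\xi-\zeta|\le C\epsilon$. Because $v$ is uniformly continuous, this sharpens to $|\xi-\zeta|^2/\epsilon\le \omega(C\epsilon)\to0$, and hence the spatial slope $p=(\xi-\zeta)/\epsilon$ satisfies $|p||\xi-\zeta|\to0$.
\item Rule out $s=0$ or $\tau=0$ for small $\epsilon$, using $u(\cdot,0)\le v(\cdot,0)$ and the modulus $\omega$.
\item Subtract the sub- and supersolution inequalities:
\[
\gamma+H(\xi,s,p+\beta\xi/\sqrt{1+\xi^2})-H(\zeta,\tau,p)\le0.
\]
Split the $H$-difference into three pieces. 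The $q$-shift costs at most $L\beta$ because $|H_q|\le L$. The time shift $|s-\tau|\to0$ is handled by local continuity of $H$, but $p$ is unbounded in $\xi$, so this needs care. The $\xi$-shift costs at most $A_T(1+|p|)|\xi-\zeta|$, which tends to $0$ by step 1. Together these give $\gamma\le L\beta+o(1)$, a contradiction once $\beta\ll\gamma$; then let $\beta,\gamma\to0$.
\end{enumerate}

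The main obstacle is the time-shift term. Since $|p|\le C$ only with $C\sim 1/\epsilon$ a priori, continuity of $H$ in $s$ is not uniform in $q$. I would handle it by first sending the time penalty to zero faster, i.e.\ using $|s-\tau|^2/(2\alpha)$ and letting $\alpha\to0$ before $\epsilon\to0$, with $\epsilon$ fixed so that $p$ stays bounded. The uniform bound on $H_\xi$ then only needs to hold for $q$ in a bounded set at that stage, and the final estimate is as above.

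\textbf{Existence (i).} Use the barriers $\underline U=0$ and $\overline U=Ms^2/2$.
\begin{itemize}
\item $\underline U=0$ is a subsolution because $H(\xi,s,0)\le0$.
\item $\overline U=Ms^2/2$ has $\overline U_s=Ms$, so $\overline U_s+H(\xi,s,0)\ge Ms-Ms=0$, and it is a supersolution.
\end{itemize}
Both vanish at $s=0$. Perron's method, taking the supremum of subsolutions lying between the barriers, yields a discontinuous viscosity solution trapped in $[0,Ms^2/2]$. Its envelopes both attain the initial data $0$, because the barriers squeeze them at $s=0$.

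To apply (ii) to the envelopes, I need one of them uniformly continuous. I would obtain this by showing the Perron solution inherits a spatial modulus, via a translation argument. Compare $U(\xi+h,s)$ with $U(\xi,s)+K(s)|h|$, where $K$ grows like $e^{A_T s}$; this is a supersolution thanks to the bound $|H_\xi|\le A_T(1+|q|)$ and $|H_q|\le L$. This gives a Lipschitz-in-$\xi$ bound on each $Q_T$. Comparison then gives $U^*\le U_*$, so $U$ is continuous, which completes (i).

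Alternatively, I could avoid Perron entirely. Since $H$ is $C^1$ with $|H_q|\le L$, one can construct $U$ via the method of characteristics and vanishing viscosity $U_s+H=\nu U_{\xi\xi}$. Uniform Lipschitz bounds in $\xi$ come from differentiating the equation, and the limit $\nu\to0$ uses stability of viscosity solutions. I would choose whichever is shorter. My preference is Perron's method plus the translation estimate, since comparison (ii) is already in hand.
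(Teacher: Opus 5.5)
The genuine gap is in (i), in the step that is supposed to make the Perron function continuous and spatially uniformly continuous. Your translation claim is false as stated. If $U$ solves the equation, then $W=U+K(s)|h|$ satisfies
\[
 W_s+H(\xi+h,s,W_\xi)=K'(s)|h|+H(\xi+h,s,U_\xi)-H(\xi,s,U_\xi),
\]
and the last difference is only bounded below by $-A_T(1+|U_\xi|)|h|$. Without an a priori bound on the slopes of test functions, no choice of $K$ makes $W$ a supersolution of the translated equation. The ODE $K'=A_T(1+K)$ you have in mind presupposes $|U_\xi|\le K$, which is what is being proved.

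The step is also circular. Comparing $U(\cdot+h,\cdot)$ with $U+K|h|$ needs a comparison principle for the a priori discontinuous Perron function, i.e.\ for its envelopes $U^*,U_*$, and that is exactly what you wanted the modulus for. Moreover, (ii) is stated and proved for functions in $C(Q_T)$, so it does not apply to semicontinuous envelopes anyway. As written, nothing in the proposal yields $U^*\le U_*$ or the spatial modulus required in (i). The vanishing-viscosity alternative is only named, not carried out.

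To close the gap, first rerun your comparison proof for a bounded upper semicontinuous subsolution and a lower semicontinuous supersolution. With the $\beta\sqrt{1+\xi^2}$ localization and $\alpha\to0$ taken first, the maximizers stay in a compact set. The standard penalization lemma then gives $|\hat\xi-\hat\zeta|^2/\epsilon\to0$ with no modulus of continuity, and $|H_q|\le L$, $|H_\xi|\le A_T(1+|q|)$ finish as in your Step 3. Perron's method then yields a continuous solution between $0$ and $Ms^2/2$. For the spatial bound, use a doubling argument whose penalty pins the slope: maximize
\[
 U(\xi,s)-U(\zeta,\tau)-K(s)\sqrt{|\xi-\zeta|^2+\theta^2}-\frac{|s-\tau|^2}{2\alpha}-\beta\sqrt{1+\xi^2}-\beta\sqrt{1+\zeta^2}-\gamma s
\]
with $K(s)=e^{A_Ts}-1$. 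At a maximizer both test slopes equal a common $p$ up to $O(\beta)$, with $|p|\le K(\hat s)$. The $\xi$-shift error $A_T(1+K)|\hat\xi-\hat\zeta|$ is then absorbed by $K'|\hat\xi-\hat\zeta|$, which gives $|U(\xi,s)-U(\zeta,s)|\le K(s)|\xi-\zeta|$ on $Q_T$.

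Two smaller points in (ii):
\begin{itemize}
 \item Boundedness only gives $|\hat\xi-\hat\zeta|\le C\sqrt\epsilon$, not $C\epsilon$.
 \item The term $\gamma s$ does not keep the maximizer away from $s=T$, where the viscosity inequalities are unavailable; use a term such as $\gamma/(T-s)$.
\end{itemize}
Your handling of the time-shift term is the right one: you localize and send $\alpha\to0$ first, which is needed because nothing bounds $H_s$. For reference, the paper proves neither existence nor comparison itself. It cites Crandall and Lions (1986) for both, gets the bounds from the same two barriers you use, and glues the solutions on $Q_T$ by uniqueness.
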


The existence and comparison assertions follow from
the Hamilton--Jacobi theory in \cite{CrandallLions1986},
including the comparison result requiring spatial uniform
continuity of only one of the two functions.
The bounds in \textup{(i)} follow by comparison with
the subsolution $0$ and the supersolution $Ms^2/2$.
Solutions on finite time intervals agree by uniqueness
and therefore define a solution for all $s\ge0$.

\subsection{Construction of the storage function}
\label{app:storage}
This appendix states and proves the two-dimensional storage lemma
used in Theorem~\ref{thm:MY}.

\begin{lemma}\label{lem:storage}
Fix $\mu,\delta,\varepsilon>0$.
Let $B\in C^1(\R^3)$ satisfy
$B_p\ge1$, $B_y\ge\delta B_p$, and
$B_v\ge\varepsilon B_p$ on $\R^3$.
Then there exists a locally Lipschitz function $W:\R^2\to\R$ such that
\begin{equation}
 0\le W(y,p)\le \frac{[p-p_0(y)]^2}{2\mu\varepsilon},
 \label{eq:storage-value}
\end{equation}
where $p_0(y)$ is the unique solution in $p$ of the equation $B(y,0,p)=0$. Moreover, at every point where $W$ is differentiable,
\begin{equation}
 W_y=B(y,-\mu W_p,p),\qquad W_pW_y\ge\delta W_p^2.
 \label{eq:storage-match}
\end{equation}
\end{lemma}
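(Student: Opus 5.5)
The plan is to obtain $W$ as a viscosity solution of a Hamilton--Jacobi equation in which $p$, measured from the curve $p=p_0(y)$, plays the role of time and $y$ plays the role of space.

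\emph{Inverting the matching relation.} Since $B_v\ge\varepsilon B_p\ge\varepsilon>0$, for fixed $(y,p)$ the map $v\mapsto B(y,v,p)$ is a $C^1$ increasing bijection of $\R$. Let $\Phi(y,p,w)$ denote its inverse, so that $B(y,\Phi(y,p,w),p)=w$. The implicit function theorem gives $\Phi\in C^1$ with
\[
 \Phi_w=\frac1{B_v}\le\frac1\varepsilon,\qquad
 \Phi_p=-\frac{B_p}{B_v}\in\Bigl[-\frac1\varepsilon,0\Bigr),\qquad
 \Phi_y=-\frac{B_y}{B_v}.
\]
The required identity $W_y=B(y,-\mu W_p,p)$ is then equivalent to
\[
 W_p+\frac1\mu\,\Phi(y,p,W_y)=0,
\]
a first-order equation of evolution type in $p$.

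\emph{Straightening the initial curve.} Since $B(y,0,p_0(y))=0$, we have $\Phi(y,p_0(y),0)=0$. I will set $\xi=y$ and $s=p-p_0(y)$, and treat $s\ge0$ and $s\le0$ separately; the second case is handled by reflecting $s\mapsto-s$ and making the corresponding sign changes. Writing $W(y,p)=U(y,p-p_0(y))$ gives $W_y=U_\xi-p_0'U_s$ and $W_p=U_s$, where $p_0'=-B_y/B_p$ at $v=0$. The equation becomes an implicit relation between $U_s$ and $U_\xi$. Solving it for $U_s$ again uses monotonicity in $v$: the map $\alpha\mapsto\alpha+\mu^{-1}\Phi(\cdot,\cdot,U_\xi-p_0'\alpha)$ has derivative at least $1$, because $\Phi_w p_0'\le0$. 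The result is $U_s+H(\xi,s,U_\xi)=0$ with $U(\xi,0)=0$.

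\emph{Checking the hypotheses of Lemma~\ref{lem:HJ}.}
\begin{itemize}
 \item $|H_q|\le L$ should follow from $\Phi_w\le1/\varepsilon$ together with the bounded ratio $B_y/B_v$ entering through $p_0'$. Some care is needed here, since $B_y$ itself is not bounded above; a temporary truncation of $B$ may be required.
 \item $-Ms\le H(\xi,s,0)\le0$ with $M=1/(\mu\varepsilon)$ should follow from $\Phi(y,p_0,0)=0$ and $\Phi_p\in[-1/\varepsilon,0)$.
 \item The bound on $H_\xi$, which involves $B_y/B_v$ and $p_0''$, is the delicate point. If it fails globally, I would mollify or truncate $B$, verify the hypotheses for the modified problem, and pass to the limit using the uniform bounds.
\end{itemize}
Lemma~\ref{lem:HJ}(i) then yields $0\le U\le Ms^2/2$, which is exactly \eqref{eq:storage-value}. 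Local Lipschitz continuity of $W$ would come from comparing $U$ with its spatial and temporal translates via Lemma~\ref{lem:HJ}(ii), using the derivative bounds on $H$. At points of differentiability the viscosity equation holds classically, which gives the first identity in \eqref{eq:storage-match}.

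\emph{The dissipation inequality (main obstacle).} It remains to show $W_pW_y\ge\delta W_p^2$, that is, $B(y,-\mu\alpha,p)\ge\delta\alpha$ when $\alpha=W_p>0$, with the reverse inequality when $\alpha<0$. My approach is to prove that $W_y-\delta W_p$ has the same sign as $W_p$. To do this I would differentiate the equation formally, or compare $W$ with shifted copies $W(y+h,p+\delta h)$, and use $B_y\ge\delta B_p$ to show that the shift direction $(1,\delta)$ moves $W$ monotonically on each side of the curve $p=p_0(y)$. This is made rigorous through Lemma~\ref{lem:HJ}(ii) applied to the translated solutions, which remain sub- or supersolutions precisely because of $B_y\ge\delta B_p$. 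Together with the sign information $W_p\ge0$ for $s>0$ and $W_p\le0$ for $s<0$, which comes from $U\ge0$ and $U(\cdot,0)=0$ plus monotonicity in $s$, this would give the inequality. Justifying these translation comparisons, and the matching of the two half-problems across $s=0$, is the step I expect to be hardest.
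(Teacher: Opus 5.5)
Your overall route matches the paper's. You straighten the zero curve with $n=p-p_0(y)$ and solve the implicit relation for the $n$-derivative; your monotonicity argument in $\alpha$ is equivalent to the paper's solving of $G=0$ for $\eta$. You then get $0\le U\le n^2/(2\mu\varepsilon)$ from Lemma~\ref{lem:HJ}, obtain sign information by comparing with translates, reflect for $p<p_0(y)$, and regularize $B$.

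\textbf{Wrong translation direction.} The step you call the main obstacle is set up in the wrong direction. Monotonicity of $W$ along $(1,\delta)$ gives $W_y+\delta W_p\ge0$. Together with $W_p\ge0$ this says nothing about $W_y-\delta W_p$; for instance $W_y=0$, $W_p=1$ satisfies both but has $W_y-\delta W_p=-\delta<0$. Moreover, the translation condition $B(y+\ell,v,p+\delta\ell)\ge B(y,v,p)$ only uses $B_y\ge-\delta B_p$, not your hypothesis $B_y\ge\delta B_p$. The correct direction is $(1,-\delta)$:
\begin{itemize}
\item $B(y+\ell,v,p-\delta\ell)\ge B(y,v,p)$ is exactly $B_y\ge\delta B_p$.
\item The induced shift of the straightened variable, $\beta(y)=p_0(y)-\delta\ell-p_0(y+\ell)$, is nonnegative because $p_0'\le-\delta$.
\item Using $c_0(y+\ell)-\beta'(y)=c_0(y)$, the translate $\widehat U(y,n)=U(y+\ell,n+\beta(y))$ is a supersolution with $\widehat U(\cdot,0)\ge0$.
\item Lemma~\ref{lem:HJ}(ii) then gives $W_+(y+\ell,p-\delta\ell)\ge W_+(y,p)$, i.e.\ $W_y-\delta W_p\ge0$ on $\Omega_+$.
\end{itemize}

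\textbf{Regularization and limit.} This is where the remaining work lies, and your Lipschitz plan does not survive it. $H$ contains $p_0'$, so it is $C^1$ with the required $H_\xi$ bound only after $B$ is replaced by smooth $B_j$. These need globally bounded first and second derivatives while still satisfying all three structural inequalities. A naive cutoff destroys those inequalities. The paper instead writes $B=s+\psi$ with $s=p+\delta y+\varepsilon v$, observes that $\psi$ has nonnegative partials, mollifies, and composes with monotone truncations $\sigma_j$.

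You then need local Lipschitz bounds on $W_j$ that are uniform in $j$. Bounds obtained from $H_\xi$ depend on global bounds for $(B_j)_y$ and $p_{0,j}''$, which are not uniform in $j$. They would also need a doubling-variables argument rather than Lemma~\ref{lem:HJ}(ii) alone, since spatial translates solve a different equation.

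The paper uses the directional monotonicity a second time. For a test function touching $W_+$ from above with gradient $(h,\alpha)$, it gets $0\le\delta\alpha\le h\le B(y,-\mu\alpha,p)\le B(y,0,p)-\mu\varepsilon\alpha$. So test gradients are controlled by $|B_j(y,0,p)|$, which is locally uniform in $j$. This gives Lipschitz continuity (via cone comparison) for each $j$, and equi-Lipschitz bounds for Arzel\`a--Ascoli. Stability of viscosity solutions on compact subsets of $\Omega_\pm$ then passes the equation to the limit. Finally, the quadratic bound gives $DW=0$ on the zero curve, which settles the matching across $n=0$ that you left open.
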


\begin{proof}[Proof of Lemma~\ref{lem:storage}]

Throughout the proof, $B_y,B_v,B_p$ denote partial derivatives with
respect to the independent variables $(y,v,p)$, and
$\|\cdot\|_\infty$ denotes the supremum norm over the entire domain.

\step{Step 1: Determination of the zero curve.}
Fix $y\in\R$. Since $B_p\ge1$, the mean value theorem gives
\[
 \begin{aligned}
 B(y,0,p)\ge B(y,0,0)+p~(p\ge0),~B(y,0,p)\le B(y,0,0)+p~(p\le0).
 \end{aligned}
\]
Thus, for each fixed $y\in\R$, the function $p\mapsto B(y,0,p)$ is strictly increasing,
and $\lim_{p\to+\infty}B(y,0,p)=+\infty,\lim_{p\to-\infty}B(y,0,p)=-\infty$.
Hence the equation $B(y,0,p)=0$ has a unique solution in $p$, denoted by $p_0(y)$.
This defines a function $p_0:\R\to\R$ satisfying
$B(y,0,p_0(y))=0,~y\in\R.$
The implicit function theorem and uniqueness of the solution imply that $p_0\in C^1(\R)$ and
$p_0'(y)
 =-B_y(y,0,p_0(y))/B_p(y,0,p_0(y))\le-\delta.$
Set $c_0(y):=-p_0'(y)\ge\delta$.

\medskip
\emph{In Steps 2--5 below, we assume that $B\in C^\infty(\R^3)$
and that its first- and second-order partial derivatives are globally bounded.}
Then $p_0$ is smooth and
$\delta\le c_0(y)\le\|B_y\|_\infty$,
so $p_0$ is globally Lipschitz. A direct differentiation also shows that $c_0'$ is globally bounded.

\step{Step 2: Construction of the Hamiltonian and verification of the hypotheses of Lemma~\ref{lem:HJ}.}

The zero set of the function $(y,p)\mapsto B(y,0,p)$ is the curve
$\Gamma_0:=\{(y,p_0(y)):y\in\R\}.$
Set $\Omega_+:=\{(y,p)\in\R^2:p>p_0(y)\}.$
Introduce the smooth change of coordinates
\[
 (y,p)\mapsto(y,n),\qquad n=p-p_0(y),
\]
whose inverse is $(y,n)\mapsto(y,p_0(y)+n)$.
This transformation maps $\Gamma_0$ onto the line $n=0$
and $\Omega_+$ onto the open half-plane $\R\times(0,\infty)$.

We next construct a Hamiltonian to which Lemma~\ref{lem:HJ} applies.
Introduce independent real variables $\zeta,\eta$ representing $U_y,U_n$, respectively, and define the auxiliary function
\[
 G(y,n,\zeta,\eta)
 :=\zeta+c_0(y)\eta-B(y,-\mu\eta,p_0(y)+n).
\]
Since
\[
 G_\eta
 =c_0(y)+\mu B_v(y,-\mu\eta,p_0(y)+n)
 \ge\delta+\mu\varepsilon>0,
\]
for every fixed $(y,n,\zeta)\in\R^3$,
the function $\eta\mapsto G(y,n,\zeta,\eta)$ is strictly increasing
and tends to $\pm\infty$ as $\eta\to\pm\infty$, respectively.
It therefore has a unique zero, denoted by $A(y,n,\zeta)$. Define
\[
 H(y,n,\zeta):=-A(y,n,\zeta).
\]
The implicit function theorem and uniqueness of the zero imply that
$A,H\in C^\infty(\R^3)$.

In what follows, all partial derivatives of $B$ are evaluated at
$(y,-\mu A(y,n,\zeta),p_0(y)+n)$.

Differentiating the identity $\zeta+c_0(y)A-B(y,-\mu A,p_0(y)+n)=0$
gives that
$A_\zeta=-\frac1{c_0(y)+\mu B_v},~
 0<|A_\zeta|\le\frac1{\delta+\mu\varepsilon},$
\begin{equation}
 0<A_n=\frac{B_p}{c_0(y)+\mu B_v}\le \frac{B_p}{\delta+\mu\varepsilon B_p}\le \frac{1}{\mu\varepsilon}
 \label{eq:app-An}
\end{equation}
and
\begin{equation}
 A_y=\frac{B_y-c_0B_p-c_0'A}{c_0(y)+\mu B_v}.
 \label{eq:app-Ay}
\end{equation}
Since $B(y,0,p_0(y))=0$, uniqueness of the zero yields
$A(y,0,0)=0$.
Fix $y\in\R$ and take $\zeta=0$. By \eqref{eq:app-An}, for $n\ge0$ we have
$0\le A(y,n,0)
=\int_0^n A_n(y,t,0)\,\mathrm dt
\le n/(\mu\varepsilon)$.
Using $|A_\zeta|\le1/(\delta+\mu\varepsilon)$
and the mean value theorem in $\zeta$, we obtain
$|A(y,n,\zeta)|
 \le n/(\mu\varepsilon)
       +|\zeta|/(\delta+\mu\varepsilon),~y,~\zeta\in\R,~n\ge0.$
Consequently, $-n/(\mu\varepsilon)\le H(y,n,0)\le0$ for $n\ge0$.
Since $B_y,B_p,c_0,c_0'$ are globally bounded, \eqref{eq:app-Ay} and the preceding estimate for $A(y,n,\zeta)$ imply that, for every $T>0$, there exists a constant $C_T<\infty$ such that
$|H_y(y,n,\zeta)|\le C_T(1+|\zeta|)$
for all $y,\zeta\in\R$ and $0\le n\le T$.

Thus $H$ satisfies all the hypotheses of Lemma~\ref{lem:HJ},
with $(\xi,s,q)$ corresponding to $(y,n,\zeta)$, respectively.

\step{Step 3: Solution of the Hamilton--Jacobi equation and return to the original coordinates.}
By Lemma~\ref{lem:HJ}(i), the initial value problem
\begin{equation}
 U_n+H(y,n,U_y)=0\quad(n>0),\qquad U(y,0)=0
 \label{eq:app-HJ}
\end{equation}
admits a continuous viscosity solution $U:\R\times[0,\infty)\to\R$ satisfying
\begin{equation}
 0\le U(y,n)\le\frac{n^2}{2\mu\varepsilon},~n\ge0.
 \label{eq:app-barriers}
\end{equation}
For every $T>0$, $U$ is uniformly continuous with respect to $y$, uniformly for $n\in[0,T]$.

Define $$W_+(y,p):=U(y,p-p_0(y)),$$ for $p\ge p_0(y)$.
We now use test functions to verify that $W_+$ satisfies the corresponding equation in the viscosity sense on $\Omega_+$.

Since $G(y,n,\zeta,\eta)$ is strictly increasing in $\eta$ and has the unique zero $A(y,n,\zeta)=-H(y,n,\zeta)$, we have
\begin{equation}
 \eta+H(y,n,\zeta)\le0
 \quad\Longleftrightarrow\quad
 G(y,n,\zeta,\eta)\le0,
 \label{eq:app-sign}
\end{equation}
and the equivalence also holds with both inequalities reversed.

Define the continuous operator $\mathcal F(y,p,h,\alpha):=h-B(y,-\mu\alpha,p)$,
where $h,\alpha$ are independent gradient variables; in the test function characterization, they are replaced by $\varphi_y,\varphi_p$, respectively, at the point of contact.
We use the test function characterization of viscosity solutions~\cite[Chapter~1, Definition~1.1]{Tran2021} to verify that $W_+$ is a viscosity solution of $\mathcal F(y,p,(W_+)_y,(W_+)_p)=0$ on $\Omega_+$.

Suppose that $\varphi\in C^1$ touches $W_+$ from above at $(y_*,p_*)\in\Omega_+$.
Set $n_*:=p_*-p_0(y_*)$. Then $\Phi(y,n):=\varphi(y,p_0(y)+n)$ touches $U$ from above at $(y_*,n_*)$.
The chain rule gives $\Phi_y=\varphi_y-c_0(y)\varphi_p$ and $\Phi_n=\varphi_p$, with the partial derivatives of $\varphi$ evaluated at $(y,p_0(y)+n)$. The viscosity subsolution property of $U$ and \eqref{eq:app-sign} yield, at the point of contact,
$\varphi_y-B(y_*,-\mu\varphi_p,p_*)=G(y_*,n_*,\Phi_y,\Phi_n)\le0$.
The same argument for a test function touching from below gives the reverse inequality.

Thus $W_+$ is a viscosity solution of $\mathcal F(y,p,(W_+)_y,(W_+)_p)=0$ on $\Omega_+$.
We next prove that $W_+$ is locally Lipschitz.

\step{Step 4: Monotonicity and local Lipschitz continuity.}
First consider a translation $(h_0,k_0)$ satisfying
\begin{equation}
 B(y+h_0,v,p+k_0)\ge B(y,v,p),\qquad (y,v,p)\in\R^3
 \label{eq:app-translation}
\end{equation}
Taking $v=0$ and $p=p_0(y)$, we obtain $\beta(y):=p_0(y)+k_0-p_0(y+h_0)\ge0$ from $B_p>0$. Since $p_0$ is globally Lipschitz, $\beta$ is bounded, and $\beta'=c_0(y+h_0)-c_0(y)$.

Set $\widehat U(y,n)=U(y+h_0,n+\beta(y))$. If $\varphi$ touches $\widehat U$ from below at a point with $n>0$, then the transformed test gradient is $(\varphi_y-\beta'\varphi_n,\varphi_n)$. The supersolution property of $U$, \eqref{eq:app-sign}--\eqref{eq:app-translation}, and the identity $c_0(y+h_0)-\beta'=c_0(y)$ give $G(y,n,\varphi_y,\varphi_n)\ge0$. Thus $\widehat U$ is a viscosity supersolution of \eqref{eq:app-HJ}.
For each $T>0$, $U,\widehat U$ are continuous and bounded on
$\R\times[0,T]$,
and $U$ is uniformly continuous with respect to $y$, uniformly for $n\in[0,T]$. Moreover, $\widehat U(y,0)\ge0=U(y,0)$. The comparison principle in Lemma~\ref{lem:HJ} therefore yields $W_+(y+h_0,p+k_0)\ge W_+(y,p)$.

Let $e_1=(0,1)$ and $e_2=(1,-\delta)$. Since $B_p\ge1$ and $B_y-\delta B_p\ge0$, the translations $\ell e_i$ with $\ell>0$ and $i=1,2$ satisfy \eqref{eq:app-translation}, so $W_+$ is nondecreasing in these two directions. If $\varphi$ touches $W_+$ from above, write $\alpha=\varphi_p$ and $h=\varphi_y$. Directional monotonicity and the viscosity subsolution condition give $0\le\delta\alpha\le h\le B(y,-\mu\alpha,p)\le B(y,0,p)-\mu\varepsilon\alpha$. Hence $0\le\alpha\le B(y,0,p)/(\delta+\mu\varepsilon)$ and $0\le h\le B(y,0,p)$.

By localizing the cone comparison argument in~\cite[Lemma 1.33]{Tran2021}, the local boundedness of the gradients of test functions touching from above shows that $W_+$ is locally Lipschitz on $\Omega_+$.

Furthermore, for $0<n<T$, we have $0\le B(y,0,p_0(y)+n)\le\|B_p\|_\infty T$. Together with boundedness of $c_0$ and the transformation of test gradients $(\varphi_y,\varphi_p)=(\Phi_y+c_0\Phi_n,\Phi_n)$, this shows that the gradients of test functions touching $U$ from above are uniformly bounded on $\R\times(0,T)$. The same cone comparison argument and convexity of $\R\times(0,T)$ imply that $U$ is Lipschitz on $\R\times(0,T)$. By continuity, $U$ is also Lipschitz on $\R\times[0,T]$.

At every point where $W_+$ is differentiable, the viscosity equation holds in the classical sense~\cite[Remark~8.4]{Bressan2007}, while directional monotonicity gives $(W_+)_p\ge0$ and $(W_+)_y-\delta(W_+)_p\ge0$. Thus \eqref{eq:storage-match} holds.

\step{Step 5: Construction of a storage function on the whole plane.}
Define $\widetilde B(Y,V,P)=-B(-Y,-V,-P)$.
This function satisfies the same structural conditions and derivative bounds as $B$,
and the unique root of $\widetilde B(Y,0,P)=0$ is
$\widetilde p_0(Y)=-p_0(-Y)$.
Applying Steps 2--4 to $\widetilde B$ yields $\widetilde W_+$.
On $\Omega_-:=\{(y,p):p<p_0(y)\}$, define
$W_-(y,p)=\widetilde W_+(-y,-p)$.

By this change of variables and the conclusions of Steps 2--4,
$W_-$ is a viscosity solution of $-\mathcal F=0$ on $\Omega_-$,
is nonincreasing in the directions $e_1,e_2$ within $\Omega_-$, and satisfies \eqref{eq:storage-value}.
Consequently, at every point where $W_-$ is differentiable,
we have $(W_-)_y=B(y,-\mu(W_-)_p,p)$,
$(W_-)_p\le0$, and
$(W_-)_y-\delta(W_-)_p\le0$.
Thus \eqref{eq:storage-match} holds.

Define $W$ to equal
$W_+$ on $\Omega_+$, $W_-$ on $\Omega_-$, and zero on $\Gamma_0$.
For any $T>0$, in the coordinates $n=p-p_0(y)$,
Step 4 shows that the functions on the two sides are Lipschitz on
$\R\times[0,T]$ and $\R\times[-T,0]$, respectively,
and both vanish on $n=0$.
For any two points on opposite sides, split the line segment joining them at $n=0$
to obtain a Lipschitz estimate for the combined function.
Returning to the original coordinates, we conclude that $W$ is locally Lipschitz on $\R^2$
and satisfies \eqref{eq:storage-value}.

For any $x_*\in\Gamma_0$,
local Lipschitz continuity of $p_0$ and \eqref{eq:storage-value} give
$W(x)=O(|x-x_*|^2)$.
Hence $W$ is differentiable at $x_*$ and $DW(x_*)=0$.
Since $B(y,0,p_0(y))=0$,
\eqref{eq:storage-match} also holds on the zero curve.

\step{Step 6: Smooth approximation preserving the structural conditions.}
\emph{We now return to the general case $B\in C^1(\R^3)$}
and construct smooth approximations $B_j:\R^3\to\R$
that satisfy the same structural conditions and have globally bounded first- and second-order partial derivatives.

Introduce the change of coordinates $s=p+\delta y+\varepsilon v$
and define
$\psi(y,v,s)=B(y,v,s-\delta y-\varepsilon v)-s$.
Here $(y,v,s)$ are the independent variables of $\psi$,
and the original structural conditions are equivalent to
$\psi_y(y,v,s),\psi_v(y,v,s),\psi_s(y,v,s)\ge0$.

Let $C_c^\infty(\R^d)$ denote the space of smooth functions with compact support in $\R^d$.
Choose $\rho\in C_c^\infty(\R^3)$ such that
$\rho\ge0$ and $\int_{\R^3}\rho(x)\,\mathrm dx=1$.
For $j=1,2,\ldots$, set $\rho_j(x)=j^3\rho(jx)$
and define $\psi_j(y,v,s)=(\rho_j*\psi)(y,v,s)$.
Then $\psi_j$ is smooth, all its first-order partial derivatives are nonnegative,
and $\psi_j\to\psi$ in $C^1_{\mathrm{loc}}(\R^3)$; that is, the functions and all their first-order partial derivatives converge uniformly on every compact subset of $\R^3$.

Choose $\chi\in C_c^\infty(\R)$ with $0\le\chi\le1$,
such that $\chi(t)=1$ for $|t|\le1$
and $\chi(t)=0$ for $|t|\ge2$,
and set $\sigma_j(t)=\int_0^t\chi(u/j)\,\mathrm du$.
Define
$B_j(y,v,p)
=s+\psi_j(\sigma_j(y),\sigma_j(v),\sigma_j(s))$,
where $s=p+\delta y+\varepsilon v$.
Since the first-order partial derivatives of $\psi_j$ and the derivative $\sigma_j'$ are all nonnegative,
for every $(y,v,p)\in\R^3$ we have
$(B_j)_p(y,v,p)\ge1$,
$(B_j)_y(y,v,p)\ge\delta(B_j)_p(y,v,p)$, and
$(B_j)_v(y,v,p)\ge\varepsilon(B_j)_p(y,v,p)$.

For each fixed $j$, the function $\sigma_j$ and all its derivatives are bounded.
Hence $B_j$ is smooth and its first- and second-order partial derivatives are globally bounded.
Moreover, since $\sigma_j(t)=t$ for $|t|\le j$,
and $\psi_j$ converges to $\psi$ in $C^1_{\mathrm{loc}}(\R^3)$,
we obtain convergence in $C^1_{\mathrm{loc}}(\R^3)$:
\begin{equation}
 B_j\longrightarrow B.
 \label{eq:app-Bapprox}
\end{equation}
Let $p_{0,j}(y)$ be the unique root in $p$ of $B_j(y,0,p)=0$.
Since $(B_j)_p\ge1$, for every $y\in\R$ we have
$|p_{0,j}(y)-p_0(y)|
\le|B_j(y,0,p_0(y))-B(y,0,p_0(y))|$.
Together with \eqref{eq:app-Bapprox}, this implies that the following convergence is uniform on every compact interval in $\R$:
\begin{equation}
 p_{0,j}\longrightarrow p_0.
 \label{eq:app-rootapprox}
\end{equation}
For each $j$, apply Steps 2--5 to $B_j(y,v,p)$
to obtain the corresponding storage function $W_j(y,p)$.

\step{Step 7: Compactness and passage to the limit.}
By \eqref{eq:storage-match} applied to $B_j,W_j$,
the inequality $(B_j)_v\ge\varepsilon$, and the mean value theorem in $v$,
at every point $(y,p)$ where $W_j$ is differentiable we have
\begin{equation}
 |(W_j)_p(y,p)|
 \le\frac{|B_j(y,0,p)|}{\delta+\mu\varepsilon},
 \qquad
 |(W_j)_y(y,p)|\le|B_j(y,0,p)|.
 \label{eq:app-gradientbounds}
\end{equation}
By \eqref{eq:app-Bapprox}, these gradients are bounded uniformly in $j$ on every compact set.
Since $W_j$ is locally Lipschitz,
its weak gradient agrees almost everywhere with its classical gradient.
Thus $\{W_j\}$ is locally equi-Lipschitz.
The quadratic bound
$0\le W_j(y,p)\le[p-p_{0,j}(y)]^2/(2\mu\varepsilon)$
and \eqref{eq:app-rootapprox} also show that $\{W_j\}$ is locally uniformly bounded.
By the Arzel\`a--Ascoli theorem and a diagonal argument,
there is a subsequence, still denoted by $W_j$,
such that $W_j\to W$ uniformly on every compact subset of $\R^2$.
The uniform local Lipschitz estimates and the quadratic bounds pass to the limit.
Hence $W$ is locally Lipschitz on $\R^2$ and satisfies \eqref{eq:storage-value}.

Set
$\Omega_{\pm,j}
=\{(y,p)\in\R^2:\pm(p-p_{0,j}(y))>0\}$
and define
$\mathcal F_j(y,p,h,\alpha)
=h-B_j(y,-\mu\alpha,p)$.
By \eqref{eq:app-Bapprox}--\eqref{eq:app-rootapprox},
$\mathcal F_j\to\mathcal F$ locally uniformly in
$(y,p,h,\alpha)\in\R^4$,
and every compact subset of $\Omega_\pm$ is contained in
$\Omega_{\pm,j}$ for all sufficiently large $j$.
On any fixed open set compactly contained in $\Omega_\pm$,
apply the stability of viscosity solutions
\cite[Section~8.4]{Bressan2007}
to conclude that $W$ is a viscosity solution of
$\mathcal F(y,p,W_y,W_p)=0$ on $\Omega_+$ and of
$-\mathcal F(y,p,W_y,W_p)=0$ on $\Omega_-$.
Therefore, at every point $(y,p)$ in $\Omega_+\cup\Omega_-$
where $W$ is differentiable, we have
$W_y(y,p)=B(y,-\mu W_p(y,p),p)$.

Similarly, local uniform convergence and \eqref{eq:app-rootapprox} imply that
the directional monotonicity of $W_j$ along $e_1,e_2$
passes to $W$ in the interior of each region.
Thus, at every point $(y,p)$ where $W$ is differentiable,
we have
$W_p(y,p)\ge0$ and
$W_y(y,p)-\delta W_p(y,p)\ge0$ on $\Omega_+$,
while both inequalities are reversed on $\Omega_-$.
Consequently,
$W_p(y,p)\bigl(W_y(y,p)-\delta W_p(y,p)\bigr)\ge0$ on both sides.
Finally, \eqref{eq:storage-value} and the quadratic bound argument in Step 5 show that
$W$ is differentiable at every point of $\Gamma_0$,
with $DW(y,p_0(y))=0$ for all $y\in\R$.
Since $B(y,0,p_0(y))=0$,
\eqref{eq:storage-match} holds at every point where $W$ is differentiable.
\end{proof}

\end{document}